\documentclass[10pt,oneside]{amsart}
\usepackage{amssymb, amscd, amsmath, amsthm, latexsym, hyperref,amsrefs}
\usepackage{pb-diagram, fancyhdr, graphicx, psfrag, enumerate}
\usepackage[text={6.2in,8.5in},centering,letterpaper,dvips]{geometry}
\usepackage{enumitem}\setlist[enumerate,1]{font=\upshape, itemsep=.5ex}\setlist[itemize,1]{font=\upshape, itemsep=.5ex}
\usepackage{pinlabel}
\usepackage{caption}
\usepackage{xcolor}



\def\Z{{\mathbb Z}}

\def\R{{\mathbb R}}

\def\S{{\mathbb S}}
\def\A{{\mathbb A}}

\def\call{\mathcal{L}}

\def\calm{\mathcal{M}}

\def\diff{\mathcal{D}}

\def\SO{\textrm{SO}}

\def\Sn{{\mathbb S}_n}

\def\diff{ { \sf {Diff} }}

\def\co{\colon\thinspace}



\newtheorem{theorem}{Theorem} [section]
\newtheorem{lemma}[theorem]{Lemma}
\newtheorem{corollary}[theorem]{Corollary}

\theoremstyle{definition}

\newtheorem{definition}[theorem]{Definition}
\newtheorem{example}[theorem]{Example}

\begin{document}
\title{intrinsic symmetry groups of links}

\author{Charles Livingston}
\thanks{This work was supported by a grant from the National Science Foundation, NSF-DMS-1505586.   }
\address{Charles Livingston: Department of Mathematics, Indiana University, Bloomington, IN 47405}\email{livingst@indiana.edu}



\begin{abstract}   

The set of isotopy classes of ordered $n$--component  links in $S^3$ is acted on by the symmetric group, $\Sn$, via permutation of the components.  The   subgroup  $\S(L) \subset \Sn$ is defined to be  the set of  elements in the symmetric group that preserve the ordered  isotopy type of $L$ as an unoriented link.   The study of these groups  was initiated in 1969, but the question of whether or not every subgroup of $\Sn$ arises as such an intrinsic  symmetry group of some link has remained open.  We provide counterexamples; in particular,  if  $n \ge 6$ , then there does not exist an  $n$--component link $L$ for which $\S(L)$ is  the alternating group $\A_n$. 


\end{abstract}

\maketitle


\section{Introduction} \label{sec:introduction} 


The {\it oriented diffeomorphism group}  of an ordered link $L = \{L_1, \ldots , L_n\} \subset S^3$ consists of  all orientation preserving diffeomorphisms of $S^3$ that preserve the link setwise.  We denote this group  $\diff(L)$.   The action of   $\diff(L)$ on the components of $L$ defines a homomorphism from $\diff(L)$  to    the symmetric group $\S_n$; its image  is denoted $\S(L)$.   A basic question asks whether  every subgroup $H \subset \S_n$ arises as $\S(L)$ for some $n$--component link.  We provide obstructions.  Our   examples of groups that do not arise are the  alternating groups,  $\A_n$, for $n\ge 6$.   \smallskip

\noindent{\bf Theorem.} {\it If $n \ge 6$, then there does not exist an ordered $n$--component link  $L$ that satisfies $\S(L) = \A_n$.}

\smallskip

The study of symmetries of links is usually placed in the context of an extension of the symmetric group called the {\it Whitten group}:  
\[
\Gamma_n = \Z_2 \oplus \big(     (\Z_2)^n \rtimes \Sn \big).
\]
In the semidirect product, $\Sn$ acts on $(\Z_2)^n$ in by permuting the coordinates.  As we will describe in the next section,  the $\Z_2$ factors keep track of the orientations of $S^3$  and   the components of $L$.  The question of which subgroups of $\Gamma_n$ arise from links was first  considered by  Fox and    Whitten in the mid-1960s, first appearing in print in 1969~\cite{MR242146}.  The theorem stated above provides the first examples of groups that cannot arise.
\smallskip

\noindent{\bf Summary of proof.} The basic idea of our approach is as follows.  For a given link $L$ there is a Jaco-Shalen-Johannson   JSJ--decomposition of the complement of $L$ into hyperbolic and Seifert fibered components $\{C_i\}$.  This decomposition is unique up to isotopy.  We first observe that if   $\S(L)$ does not contain an index two subgroup, then   one of the $C_i$ (say $C_1$) is invariant under the action of $\diff(L)$ up to isotopy.

If $C_1$ is hyperbolic, we can replace the action of $\diff(L)$ restricted to $C_1$ with a finite group of isometries of $C_1$.  We   then use a  reembedding of $C_1$ into $S^3$ (as first  described  by Budney in~\cite{MR2300613}) to  extend that action to $S^3$.  It   follows from results such as~\cite{MR2178962} that the action on $S^3$ is conjugate to a  linear action.  We then find that $\S(L)$ is a quotient of a finite subgroup of $\SO(4)$.   Finally,  a group theoretic analysis reduces the problem to the simpler one of considering quotients of finite subgroups of $\SO(3)$, which  are enumerated.

In contrast to the hyperbolic case,  if   $C_1$ is Seifert fibered, then the diffeomorphism group of $C_1$ itself  is large, sufficiently so that we can construct enough symmetries of $L$ to show that  $\S(L) = \Sn$.  

\smallskip

\noindent{\bf Outline.}  Section~\ref{sec:generalities} describes the general theory of intrinsic symmetry groups of  oriented links, as first considered by Fox and Whitten~\cite{MR242146}.  Sections~\ref{sec:knots} and~\ref{sec:links} describe the classical case of knots, $n=1$, and results for the case of $n=2$.
Section~\ref{sec:fully}  presents prime, non-split links, with full symmetry group for all $n$.

In Section~\ref{sec:jsw-trees} we describe JSJ-decompositions, the associated tree diagrams, and prove that in the case of $\S(L) = \A_n$, some component of the decomposition is fixed (up to isotopy) by the action of the diffeomorphism group.  
Section~\ref{sec:reembed} explains how that distinguished component can be reembedded into $S^3$ as the complement of a link.  The reembedding is used in   Section~\ref{sec:hyperbolic} to show that if the fixed component is hyperbolic, then $\S(L)$ is a subgroup of a quotient of a  finite subgroup of $SO(4)$.  Finally, in Section~\ref{sec:seifertfibered} we present the Seifert fibered case.  In the concluding Section~\ref{sec:questions}, we present a few questions and include an example of a 4--component link $L$ with $\S(L) = \A_4$.
\smallskip

\noindent{\bf Notational comment.}  We are calling the groups studied here the {\it intrinsic symmetry groups} of links.  The {\it symmetry group} of a link consists of the group of diffeomorphisms of $S^3$ that leave the link invariant, module isotopy.  Even for knots, these symmetry groups include, for instance, all  dihedral groups.  

\smallskip
\noindent{\it Acknowledgments}   I have benefited from comments from Ryan Budney, whose work provides a backdrop for our approach. Nathan Dunfield provided an example of a four-component link $L$ with $\S(L) = \A_4$.  I was also helped by discussions with   Jim Davis, Allan Edmonds, Charlie Frohman, Michael Larsen,  Swatee Naik and Dylan Thurston.   


\section{The general setting of oriented links}\label{sec:generalities}
We now describe the general theory of {\it intrinsic symmetry groups} of links.  This theory was initially developed by Fox and was first presented by   Whitten in~\cite{MR242146}.   To be precise, we will momentarily consider links in  3--manifolds that are diffeomorphic to $S^3$, rather than work specificially with $S^3$.   In this setting we have the following definition: an {\it $n$--component link}\ is an  ordered  $(n+1)$--tuple of oriented manifolds,  $L = (S, L_1, L_2, \ldots , L_n)$, where $S$ is diffeomorphic to $S^3$  and the $L_i$ are disjoint submanifolds of $S$, each diffeomorphic to $S^1$.  The set of $n$--component links will be denoted $\call_n$.

Given a second link $L' = (S', L_1', L_2', \ldots , L_n')$, an {\it orientation preserving diffeomorphism} from $L$ to $L'$ is an orientation preserving diffeomorphism $F \co S \to S'$ such that $F(L_i) = L'_i$ as oriented manifolds  for all $i$.  

For any oriented manifold $M$, $-M$ denotes its orientation reverse. Let $\Z_2$ be the cyclic group of order two written multiplicatively:   $\Z_2 = \{1, -1\}$.  If $\epsilon = -1 \in \Z_2$, we will let $\epsilon M = -M$, and if $\epsilon = 1 \in \Z_2$, we will let $\epsilon M = M$.  
The group $\Z_2 \oplus (\Z_2)^n$   acts on $\call_n$ by  changing the orientations of the factors.  The symmetric group $\Sn$  acts on $\call_n$ by permuting the component knots.  These actions do not commute, but together define an action on the set of knots by the    {\it Whitten group}: 
\[
\Gamma_n = \Z_2 \oplus \big(     (\Z_2)^n \rtimes \Sn \big).
\]
In this semi-direct product, $\Sn$ acts on the $n$--fold product by permuting the coordinates. 
To be precise, given  an element $s = \big(\eta, (\epsilon_1, \ldots , \epsilon_n), \rho \big) \in \Gamma_n$ and an $n$--component link  $L$, we let 
\[ sL =( \eta S,  \epsilon_1 L_{\rho(1)}, \cdots , \epsilon_n L_{\rho(n)}) .\]  Notice that these  group actions are defined to be on the  left.  Thus, elements in $\Sn$ are multiplied right-to-left.  

\begin{definition} For a link $L \in \call_n$,   the {\it intrinsic symmetry group} of $L$ is the subgroup $\Sigma(L) = \{ s \in \Gamma_n\ \big| \ sL \cong L\} \subset \Gamma_n$.  Note that ``$\cong$'' indicates the existence of an orientation and order preserving diffeomorphism.  

\end{definition}

There are two fundamental questions regarding such link symmetries. \vskip.05in

\noindent{\bf Problem 1.}  Given    an $n$--component  link $L$, determine  $\Sigma(L)$.\vskip.05in

\noindent{\bf Problem 2.}  For each subgroup $H \subset \Gamma_n$, does there exist an $n$--component  link $L$ such that  $\Sigma(L) = H$?

\smallskip

The first   can be effectively answered for low-crossing number links with programs such as Snappy~\cite{SnapPy}.  The second is the focus of this paper; we  present the first examples of groups that cannot arise as the symmetry group of a link.

\subsection{Restricting to the oriented category and basic observations}  

There is a canonical index two subgroup $\overline{\Gamma}_n \subset  {\Gamma}_n$ consisting of elements of the form 
\[ (1, (\epsilon_1, \ldots, \epsilon_n), \rho).\]  This subgroup maps onto $\S_n$.  We leave it to the reader to verify the following, which   implies that any  constraint on what groups occur as $\S(L)$ places a constraint on what groups can arise as $\Sigma(L)$. 

\begin{theorem}  The image of $\Sigma(L) \cap \overline{\Gamma}_n$ in $\S_n$ is precisely $\S(L)$. \end{theorem}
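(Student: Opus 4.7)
The plan is to verify both inclusions between the image of $\Sigma(L) \cap \overline{\Gamma}_n$ and $\S(L)$ as subgroups of $\S_n$, using at a pivotal moment the fact that both are closed under inversion. The map in question is the restriction to $\Sigma(L) \cap \overline{\Gamma}_n$ of the natural projection $\overline{\Gamma}_n \to \S_n$, $(1, \vec\epsilon, \rho) \mapsto \rho$; this projection is a surjective group homomorphism, so the image under consideration is automatically a subgroup, which is what will let me pass between a permutation and its inverse.

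For the forward inclusion, I would take $s = (1, \vec\epsilon, \rho) \in \Sigma(L) \cap \overline{\Gamma}_n$ and unpack $sL \cong L$: there is an orientation-preserving diffeomorphism $F \co S^3 \to S^3$ sending the $i$-th oriented component of $sL$, namely $\epsilon_i L_{\rho(i)}$, to $L_i$. Forgetting the orientations of the components, $F$ carries the unoriented circle $L_{\rho(i)}$ onto $L_i$, so $F$ lies in $\diff(L)$ and its induced permutation of components is $\rho^{-1}$. Thus $\rho^{-1} \in \S(L)$, and closure of $\S(L)$ under inverses gives $\rho \in \S(L)$.

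For the reverse inclusion, I would start with $\pi \in \S(L)$, realized by an $F \in \diff(L)$ satisfying $F(L_j) = L_{\pi(j)}$ as unoriented submanifolds, and record the orientation data as $F(L_j) = \delta_j L_{\pi(j)}$ with $\delta_j \in \{\pm 1\}$. Setting $\rho = \pi^{-1}$ and $\epsilon_i = \delta_{\rho(i)}$, a direct computation gives $F(\epsilon_i L_{\rho(i)}) = L_i$ as oriented manifolds for each $i$, so $F$ is an orientation- and order-preserving diffeomorphism from $sL$ to $L$ with $s = (1, \vec\epsilon, \rho) \in \overline{\Gamma}_n$. Hence $s \in \Sigma(L) \cap \overline{\Gamma}_n$ projects to $\pi^{-1}$ in $\S_n$, and subgroup closure again delivers $\pi$ in the image.

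The only genuine subtlety, and the spot where I would take most care, is a convention mismatch: the Whitten-group action on $\call_n$ and the homomorphism $\diff(L) \to \S_n$ assign permutations under inverse conventions, so the element of $\Sigma(L) \cap \overline{\Gamma}_n$ corresponding to an $F$ that induces $\pi$ has third coordinate $\pi^{-1}$ rather than $\pi$. Because we are proving an equality of subgroups rather than identifying individual elements, this inversion is harmless, but it needs to be handled cleanly to avoid circular reasoning. Beyond this bookkeeping, I do not anticipate any substantive obstacle.
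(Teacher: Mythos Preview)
Your argument is correct and carefully handles the one genuine subtlety, namely that the permutation attached to an $F\in\diff(L)$ via $\Phi$ and the permutation appearing in the Whitten element witnessing $sL\cong L$ via that same $F$ are inverses of one another. The paper does not give its own proof of this statement (it is left to the reader), so your verification is precisely what is being asked for; there is nothing to compare against beyond noting that your unpacking of the definitions matches the paper's conventions for $sL$, for $\cong$, and for $\Phi$.
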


After the  initial sections of this paper, we will be  restricting our  work to orientation preserving diffeomorphisms of $S^3$ and will work with unoriented links.      We will use the  following conventions which were summarized in the introduction.

\begin{enumerate} 
\item Links will all be of the form   $L = (S^3, L_1, L_2 , \ldots ,L_n)$ where $S^3$ has some fixed  orientation and the $L_i$ are disjoint unoriented submanifolds, each diffeomorphic to $S^1$. 
\item We will consider diffeomorphisms of the   link that are   orientation preserving on $S^3$ and that possibly permute the set of $L_i$. 
\item The set of such diffeomorphisms  will be   denoted $\diff(L)$.

\item Given  $F \in \diff(L)$ we have $(S^3, F(L_1), F(L_2), \ldots , F(L_n)) = (S^3,  L_{\rho(1)} , L_{\rho(2)} \ldots ,  L_{\rho(n)})$ for some $\rho \in \S_n$.  This defines a homomorphism $\Phi \co \diff(L) \to \Sn$.

\item The image $\Phi$  in $\S_n$ is denoted $\S(L)$.

\end{enumerate}



\section{Examples: knots}\label{sec:knots} 

Before restricting to the orientation preserving diffeomorphism  group, in this section and the next we will summarize what is known in general for links of one and of two components. Then, in Section~\ref{sec:fully}, we show that for all $n$ there is a prime, non-splittable link with 
$\Sigma(L) = \Gamma_n$.   

Let  $n=1$.    The symmetric group $\S_1$ is trivial and thus the first Whitten group is   $\Gamma_1 \cong \Z_2 \oplus \Z_2$.  The knots $(1,-1)K, (-1,1)K,$ and $(-1,-1)K$ have been called the {\it reverse}, $K^r$, the {\it mirror image}, $m(K)$, and the {\it reversed mirror image}, $m(K)^r$, respectively. (Older references have called the reverse of $K$ the {\it inverse}.  The name ``reverse'' is used to distinguish it from the concordance inverse, which is represented by the reversed mirror image.)  Figure~\ref{fig:trefoils} illustrates the possibilities.  A detailed account of the key results in the study of knot symmetries is contained in~\cite{MR867798}.  Here is a brief summary.

The group $\Gamma_1$ has five subgroups:  the entire group, the trivial subgroup, and the three subgroups containing exactly one of the nontrivial elements of $\Gamma_1$.  Each is realized as   $\Sigma(K)$ for some knot $K$.  

\begin{itemize}
\item  The unknot or the figure eight knot, $4_1$, have full symmetry group.  They are called {\it fully amphicheiral}.

\item  The trefoil knot is   reversible.   Dehn showed that it does not equal its mirror image, a fact that can now be proved using such invariants as the signature or the Jones polynomial.  Thus, $3_1$ is {\it reversible}. 

\item  Trotter~\cite{MR0158395} proved the existence of non-reversible knots.  His examples in~\cite{MR0158395}  have nonzero signature  and thus have trivial symmetry group.  We say that such knots are  {\it chiral}.  Hartley~\cite{MR683753}  proved that $9_{32}$ is non-reversible and since it has non-zero signature, it too is chiral.

\item Kawauchi~\cite{MR559040} proved that $K = 8_{17}$ is non-reversible.  It is easily seen that $K = m(K)^r$, and thus $8_{17}$ is {\it negative amphicheiral}.

\item The simplest example of a low-crossing number knot that is   non-reversible and for which $K = m(K)$ is $12a_{147}$, which was detected by the program Snappy.  (Presumably the general techniques developed by Hartley in~\cite{MR683753} would also show that this knot is not reversible.)   More complicated examples of such {\it positive amphicheiral} knots were   first discovered by Trotter.

\end{itemize}

\begin{figure}[h]
\labellist
\pinlabel {\text{{$K$}}} at 100 -35
\pinlabel {\text{{$(1,-1)K = K^r$}}} at 500 -35
\pinlabel {\text{{$(1,-1)K = m(K)$}}} at 800 -35
\pinlabel {\text{{$(-1,1)K = m(K)^r$}}} at 1150 -35
\endlabellist
\includegraphics[scale=.3]{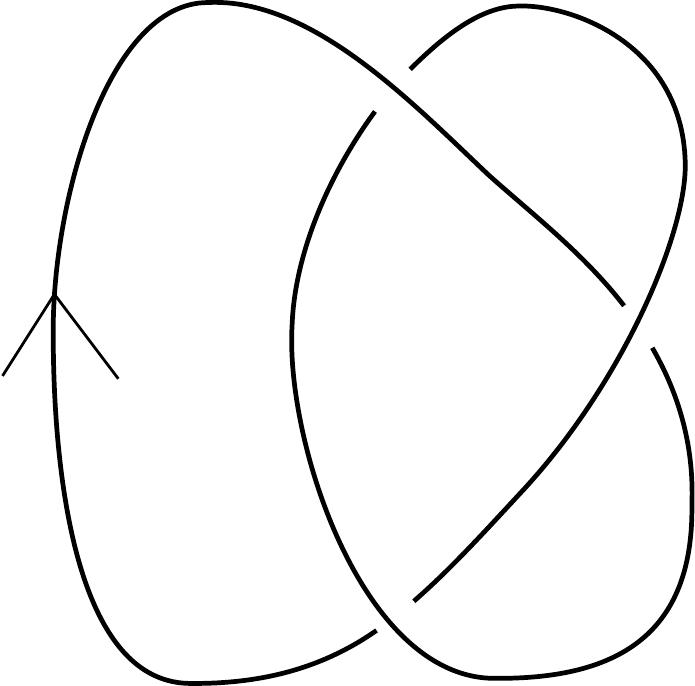} \hskip.6in \includegraphics[scale=.3]{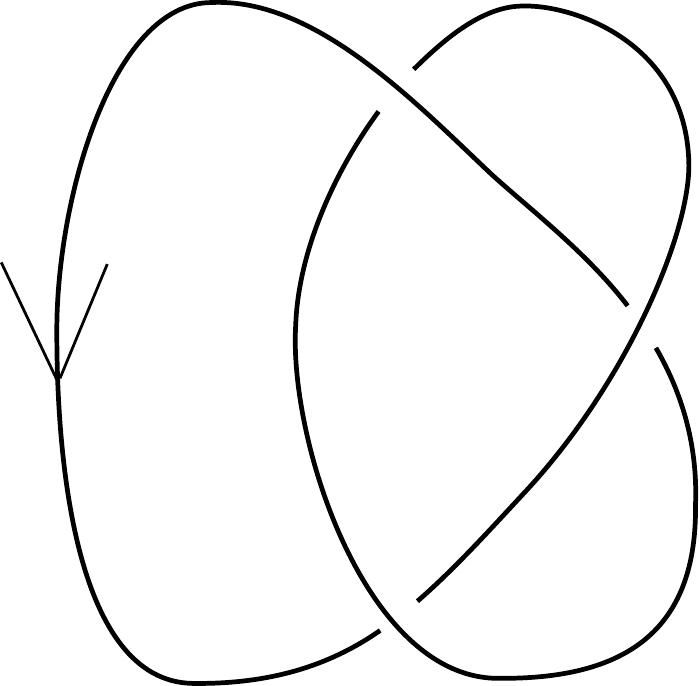} \hskip.6in 
\includegraphics[scale=.3]{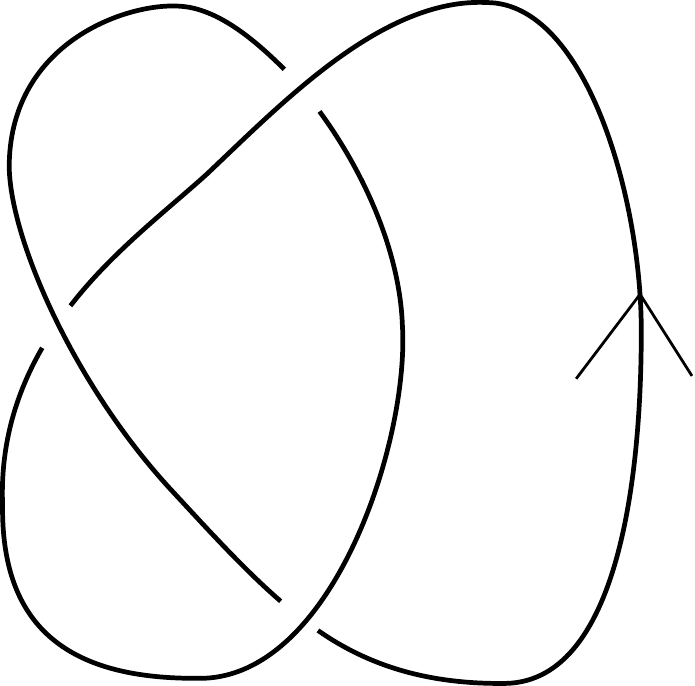} \hskip.6in   \includegraphics[scale=.3]{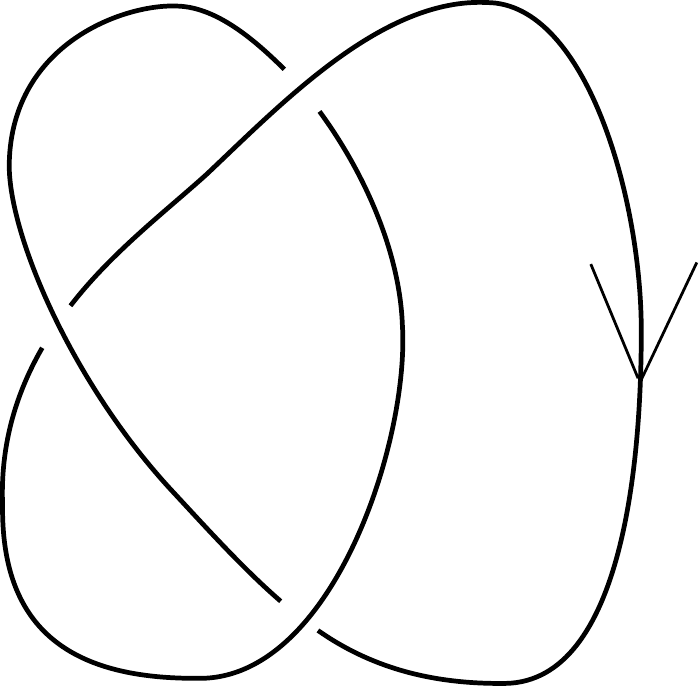} 
\vskip.15in
\caption{Symmetries of knots.}
\label{fig:trefoils}
\end{figure}


\section{Two-component links}\label{sec:links}

Here we summarize the results of~\cite{MR2909632, MR2909631} concerning two-component links. We have that $\Gamma_2  = \Z_2 \oplus \big(     (\Z_2)^2 \rtimes \S_2 \big)$ is of order 16.  In~\cite{MR2909632, MR2909631}  the authors describe the 27 conjugacy classes of subgroups of $\Gamma_2$.  They then show that tables of prime, non-splittable links provide examples of links realizing 21 of these subgroups.   One of the missing subgroups is $\Gamma_2$ itself.  This is clearly the symmetry group of the unlink; in a note on MathOverflow~\cite{rbudney}, Budney showed that $\Gamma_2$ is the symmetry group of a non-splittable  Brunnian link.  We will expand on that example in the next section.

To conclude this section, we list the subgroups that are currently not known to be the symmetry groups of two-component links.  Let $\tau$ denote the transposition in $\S_2$.

\begin{itemize}
\item $\left<   \      (1, (-1,1)\  \tau   )                 \right> \cong \Z_4$. \vskip.05in

\item $\left<     \    (1, (-1,1))\    ,\ (-1, (1,1)) \                  \right> \cong \Z_2 \oplus \Z_2$. \vskip.05in

\item $\left<      \   (1, (1,-1))\    ,\ (-1, (-1,1))   \                \right> \cong \Z_2 \oplus \Z_2$. \vskip.05in

\item $\left<       \  (-1, (-1,1)) \   ,\ (1, (-1,1)\tau)  \                 \right> \cong D_4$, the dihedral group with four elements. \vskip.05in

\item $\left<       \  (1, (1,-1)) \   ,\ (1, (-1,1))  \   ,\  (-1, (1,1)) \               \right> \cong \Z_2 \oplus \Z_2 \oplus \Z_2$.\vskip.05in

\end{itemize}

\section{Fully amphicheiral links for all $n$} \label{sec:fully} In Figure~\ref{fig:companion} we illustrate a knot $K$ in a solid torus $D$.  Two parallel strands of $K$ are tied in a  knot $J$, where $J$ is chosen to be fully amphicheiral; the figure eight knot would be sufficient.  As oriented pairs, we have $(D,K) \cong (-D,K) \cong (D, -K) \cong (-D, -K)$.

\begin{figure}[h]
\labellist
\pinlabel {\text{\Large{$ J$}}} at 180 58
\endlabellist
\includegraphics[scale=.32]{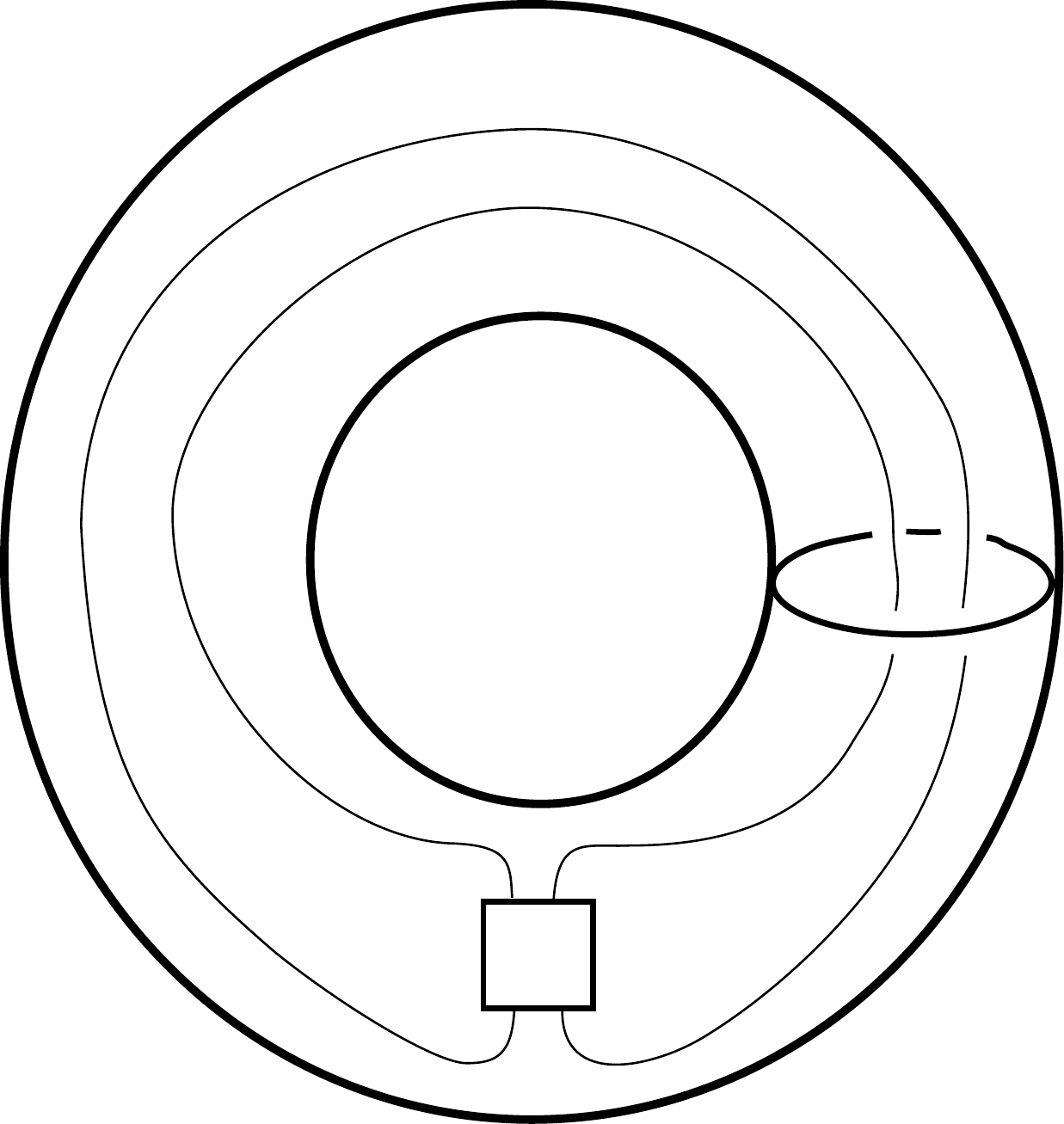}  
\vskip.15in
\caption{Companion.}
\label{fig:companion}
\end{figure}

Budney's example~\cite{rbudney} of a two-component link $L$ with full symmetry group $\Sigma(L) = \Gamma_2$ is formed from the Hopf link by replacing neighborhoods of each component with copies of $(D,K)$.  An example of a three-component link with full symmetry group is built in the same way, starting with the Borromean link.  Notice that in both these examples, the links are Brunnian.   
Problem~\ref{prob:brunnian} in Section~\ref{sec:questions} asks:   Does there exist a  Brunnian link with  four or more components with full symmetry group? 

We conclude this section with an elementary observation.

\begin{theorem}\label{thm:fullsym}  For every $n$ there exists a   prime, non-splittable  link $L$ for which $\Sigma(L) \cong \Gamma_n$.
\end{theorem}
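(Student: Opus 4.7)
My plan is to extend Budney's $n=2,3$ construction to all $n$, in two steps. First, choose a base $n$-component link $L_0\subset S^3$ whose ambient diffeomorphisms realize every permutation $\rho\in\Sn$ of components together with an orientation reversal of $S^3$. Second, replace a tubular neighborhood of each component of $L_0$ by a copy of the pair $(D,K)$ of Figure~\ref{fig:companion}. The four-fold amphicheirality $(D,K)\cong(\pm D,\pm K)$ absorbs the per-component orientation signs $\epsilon_i$, promoting the $\Z_2\oplus\Sn$-action on $L_0$ to the full Whitten group $\Gamma_n$ acting on the resulting link $L$.

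For $L_0$, take $n$ fibers of the Hopf fibration $\pi\co S^3\to S^2$ sitting above $n$ distinct points $p_1,\dots,p_n\in S^2$. Any two such fibers are Hopf-linked, so $L_0$ is non-splittable. For any $\rho\in\Sn$, choose an orientation-preserving diffeomorphism $\phi$ of $S^2$ with $\phi(p_i)=p_{\rho(i)}$; because the Hopf bundle has Euler number $\pm 1$ and $\phi$ has degree $+1$, the pullback $\phi^*\pi$ is bundle-isomorphic to $\pi$, so $\phi$ lifts to an orientation-preserving, fiber-preserving diffeomorphism $\tilde\phi$ of $S^3$ realizing $\rho$ on the components of $L_0$. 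The antiholomorphic involution $(z,w)\mapsto(\bar z,\bar w)$ of $S^3$ preserves the set of Hopf fibers and reverses the orientation of $S^3$, supplying the ambient orientation reversal.

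Given $s=(\eta,(\epsilon_1,\dots,\epsilon_n),\rho)\in\Gamma_n$, I realize $(\eta,\rho)$ on $(S^3,L_0)$ by a diffeomorphism $\Phi_0$ built as above, then extend across each companion tube by a self-diffeomorphism of $(D,K)$ whose boundary behavior matches $\Phi_0$ on $\partial N(L_0^{(i)})$ and whose effect on the core $K$ matches $\epsilon_i$. Such an extension exists because $(D,K)$ has the full four-fold orientation symmetry. The resulting diffeomorphism of $(S^3,L)$ realizes $s$, so $\Sigma(L)=\Gamma_n$. Primeness and non-splittability of $L$ reduce by standard incompressibility arguments to the same properties for $L_0$ and for the companion pattern $K$: any essential sphere for $L$ can be pushed out of the companion tubes to yield an essential sphere for $L_0$, of which there are none.

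The main obstacle I anticipate is that for $n\ge 5$ the group $\Sn$ is not a subgroup of $\SO(3)$, so rigid Hopf-bundle isometries do not suffice to realize all component permutations. This is resolved by using the full bundle-automorphism group rather than isometries: any orientation-preserving diffeomorphism of $S^2$ lifts to $S^3$ because $\phi^*\pi\cong\pi$ whenever $\deg(\phi)=1$, and any such lift is automatically orientation-preserving on the total space, so every element of $\Sn$ is realized by an honest orientation-preserving diffeomorphism of $(S^3,L_0)$.
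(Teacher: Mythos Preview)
Your approach is correct and arrives at the same companion-with-$(D,K)$ construction as the paper, but you choose a different base link. The paper takes $L_0$ to be the $(n,0)$--cable of a fully amphicheiral knot $J$: $n$ parallel $0$--framed copies of $J$ sitting in a tubular neighborhood $N(J)$. There the $\Sn$--symmetry is immediate---any orientation-preserving diffeomorphism of the disk factor of $N(J)\cong J\times B^2$ permuting the $n$ marked points extends by the identity over $S^3\setminus N(J)$---and the ambient $\Z_2$ comes from the amphicheirality of $J$. You instead take $L_0$ to be $n$ regular Hopf fibers and obtain the $\Sn$--action by lifting arbitrary degree-one diffeomorphisms of $S^2$ through the circle bundle $S^3\to S^2$, correctly noting that the Euler class is preserved so the lift exists even when $\rho$ is not realizable by an isometry.

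Both routes work. The paper's choice makes the permutation symmetry transparent and keeps the verification that the boundary identifications match those of $(D,K)$ essentially trivial (everything happens in a product $J\times B^2$). Your bundle-lifting argument is a nice alternative, though it requires slightly more care: one should note that a lift $\tilde\phi$, being a diffeomorphism of $S^3$, automatically carries the $0$--framed longitude on $\partial N(L_0^{(i)})$ to that on $\partial N(L_0^{(\rho(i))})$, so that the induced boundary map on each copy of $\partial D$ lies in the image of $\diff(D,K)\to\diff(\partial D)$ and the extension across the companion tubes goes through. Your treatment of primeness and non-splittability via innermost-circle arguments is at the same level of detail as the paper's.
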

\begin{proof}
To form an $n$--component link with full symmetry group, proceed as follows.  Starting with the knot $J$, form a link by replacing $J$ with $n$--parallel copies of $J$; formally, form the $(n,0)$--companion of $J$.  Next, replace a neighborhood of each component of that link with a copy of $(D,K)$.  Innermost circle arguments, dating to the work of Schubert~\cite{MR72482}, can be used to show that this link $L$ is prime and   non-splittable. 
\end{proof} 


\section{Torus decompositions and tree diagrams} \label{sec:jsw-trees} A principal tool in understanding knot and link complements is the Jaco-Shalen-Johannson torus decomposition, which we refer to as the JSJ--decomposition.  An excellent resource is ~\cite{MR2300613} which contains details for the results we summarize here.

Let $X$ be the complement of a  non-splittlable link $L$ in $S^3$.  The JSJ-composition of $X$ is given by a finite family of disjoint incompressible embedded tori, $\{T_i\}$, with the property that each component of the complement of $\cup T_i$    has either a complete hyperbolic structure or is Seifert fibered.  There is the additional condition that no $T_i$ is boundary parallel and that no two of the $T_i$ are parallel.  Up to isotopy, there  is a unique minimal set  $\{T_i\}$ with these properties; this set provides the JSJ-decomposition.  No two $T_i$ in the decomposition are isotopic.

We can associate a finite tree  Tr($L$)  to this decomposition, as follows.  Let the components of $X \setminus \cup T_i$ be denoted $\{C_i\}$.  The vertices of the Tr($L$) correspond to the $C_j$.  Two vertices are joined by an edge if the  closures of the corresponding $C_i$ intersect;  there is one edge for each $T_i$.   When possible, we will use the names $C_i$ and $T_i$ to denote the vertices and edges.   We will say that a component $C_i$ {\it contains a component} $L_j \in L$ if $L_j$ is in the closure of $C_i$. 

\subsection{The subtrees   $ {\text{Tr}}_L(K) $  and $\widehat{\text{Tr}}(L)$}

Let $K$ be a component of $L$.  Its orbit under the action of $\diff(L)$ is a sublink  of $L$,  $ \{K_1, \ldots, K_m\}$, where $K_1 = K$. Each $K_i$ is contained in a vertex of $\text{Tr}(L)$.  The set of such vertices is denoted $\{D_1, \ldots , D_k\}$.  (Since the action of $\diff(L)$ on the set of $K_i$ is transitive, each $D_j$ contains the same number of components of $L$.  In particular, $k$ divides $m$.  Later we will expand on this observation.

The vertices $ \{D_1, \ldots , D_k\}$ in  $\text{Tr}(L)$ span a unique minimal subtree, which we denote $ {\text{Tr}}_L(K)$.  In the case that the action of $\diff(L)$ is transitive on $L$, the orbit of $K$ is all of $L$, and we write  $\widehat{\text{Tr}}(L) = {\text{Tr}}_L(K)$.   (Notice that $\widehat{\text{Tr}}(L)$ need not equal $T(L)$;  for instance, vertices of $T(L)$ of valence one that do not contain components of $L$ are not included in $\widehat{\text{Tr}}(L)$.)  

\begin{theorem} If   $\diff(L)$ acts transitively on $L$, then the tree $\widehat{\text{Tr}}(L)$ either contains exactly one vertex, or its valence one vertices are precisely the set  $ \{D_1, \ldots , D_k\}$.
\end{theorem}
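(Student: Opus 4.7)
The plan is to combine the minimality clause in the definition of $\widehat{\text{Tr}}(L)$ with the observation that a transitive group action on $\{D_1,\ldots,D_k\}$ forces those vertices to have equal valence inside $\widehat{\text{Tr}}(L)$. The crucial preliminary point is that $\diff(L)$ induces a well-defined action on $\text{Tr}(L)$ by graph automorphisms. This follows from the isotopy-uniqueness of the JSJ decomposition: any $F \in \diff(L)$ permutes the collection of JSJ pieces $\{C_i\}$ up to isotopy, and therefore induces an automorphism of $\text{Tr}(L)$ that preserves the set of vertices containing components of $L$.

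First I would verify that, under the transitivity hypothesis, the induced $\diff(L)$-action on $\{D_1,\ldots,D_k\}$ is itself transitive. Indeed, if $F(K_a) = K_b$ with $K_a \subset D_i$ and $K_b \subset D_j$, then $F$ must send $D_i$ to $D_j$, since each link component sits in the closure of a unique JSJ piece. Transitivity on the components of $L$ therefore upgrades to transitivity on the $D_j$'s.

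Next I would establish two separate facts about $\widehat{\text{Tr}}(L)$. The \emph{minimality} fact: every valence-one vertex of $\widehat{\text{Tr}}(L)$ lies in $\{D_1,\ldots,D_k\}$, for otherwise deleting that leaf would yield a strictly smaller subtree still containing all the $D_j$'s, contradicting the definition of $\widehat{\text{Tr}}(L)$ as the minimal such subtree. The \emph{symmetry} fact: any two vertices in $\{D_1,\ldots,D_k\}$ have the same valence in $\widehat{\text{Tr}}(L)$, because $\diff(L)$ preserves the distinguished set $\{D_1,\ldots,D_k\}$, therefore preserves $\widehat{\text{Tr}}(L)$, and acts transitively on $\{D_1,\ldots,D_k\}$ by tree automorphisms.

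Finally I would split into cases. If $k = 1$, the minimal subtree spanned by a single vertex is that vertex, so $\widehat{\text{Tr}}(L)$ consists of one vertex. If $k \geq 2$, then $\widehat{\text{Tr}}(L)$ is a finite tree with at least two vertices and hence has at least two leaves; by the minimality fact these leaves belong to $\{D_1,\ldots,D_k\}$, so some $D_j$ has valence one, and by the symmetry fact every $D_j$ does. Hence $\{D_1,\ldots,D_k\}$ coincides with the leaf set of $\widehat{\text{Tr}}(L)$. The genuinely nontrivial input is the isotopy-uniqueness of the JSJ decomposition, which is what guarantees that $\diff(L)$ descends to a bona fide action by automorphisms of $\text{Tr}(L)$; once that is granted, the rest is a short combinatorial argument about finite trees carrying a vertex-transitive action on a distinguished subset.
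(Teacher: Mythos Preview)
Your proof is correct and follows essentially the same approach as the paper's: both use minimality of the spanning subtree to conclude that leaves lie among the $D_j$, and then use the isotopy-uniqueness of the JSJ decomposition to produce tree automorphisms acting transitively on the $D_j$, forcing all of them to have the same valence. Your version is somewhat more explicit in isolating the ``minimality'' and ``symmetry'' facts and in handling the $k=1$ case separately, but the substance is the same.
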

\begin{proof}
It is an elementary observation that in the subtree of a tree spanned by the set of vertices $\{D_j\}$, the only vertices of valence one correspond to elements in the set $\{D_j\}$ and that if  there is more than one $D_j$, then at least one of them is a vertex of valence one.  We need to see that each $D_j$ has valence one.

Suppose that the vertex $D_1$ is of valence one in  $\widehat{\text{Tr}}(L)$ and that it contains $L_1$.  Let $D_2$ be another vertex, and suppose it contains $L_2$.  There is an element $F \in \diff(L)$ such that $F(L_1) = L_2$.  The map $F$ is isotopic relative to $L$ to a diffeomorphism $F'$ that preserves the JSJ-decomposition.  This $F'$ induces an automorphism of  $\text{Tr}(L)$ that leaves   $\widehat{\text{Tr}}(L)$ invariant.  Thus, there is an automorphism of  $\widehat{\text{Tr}}(L)$ that carries $D_1$ to $D_2$.  It follows that $D_2$ is of valence one in  $\widehat{\text{Tr}}(L)$.
  
\end{proof}

\subsection{The group $\diff^*(L)$.}  Fix a JSJ-decomposition of $S^3 \setminus L$.   

\begin{definition}  We let $\diff^*(L) \subset  \diff(L)$ to be the subgroup consisting of elements that leave the JSJ-decomposition invariant.

\end{definition}

\begin{theorem}The image of $ \diff^*(L) $ in $\Sn$ equals $\S(L)$.

\end{theorem}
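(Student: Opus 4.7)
The plan is to prove the two inclusions $\mathrm{Image}(\diff^*(L) \to \Sn) \subseteq \S(L)$ and $\S(L) \subseteq \mathrm{Image}(\diff^*(L) \to \Sn)$ separately. The first is immediate from definitions: since $\diff^*(L) \subseteq \diff(L)$, the image of $\diff^*(L)$ under $\Phi$ is contained in the image of $\diff(L)$, which is $\S(L)$.

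For the reverse inclusion, I will take an arbitrary $\rho \in \S(L)$ and produce an element of $\diff^*(L)$ mapping to $\rho$. By definition, there exists $F \in \diff(L)$ with $\Phi(F) = \rho$. The key observation is that $F$ restricts to a self-diffeomorphism of the link complement $X = S^3 \setminus L$, so it carries the collection of JSJ-tori $\{T_i\}$ to another collection of disjoint incompressible, non-boundary-parallel, pairwise non-parallel tori in $X$ that realizes a JSJ-decomposition of $X$. By the uniqueness (up to isotopy) of the JSJ-decomposition stated earlier, $\{F(T_i)\}$ is isotopic in $X$ to $\{T_i\}$.

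I would then use an ambient isotopy of $S^3$, supported in a collar neighborhood of $L$ in $X$ so as to fix $L$ pointwise (or at least setwise, component by component), carrying $\{F(T_i)\}$ back to $\{T_i\}$. Composing $F$ with this isotopy yields a diffeomorphism $F' \in \diff(L)$ that setwise preserves $\cup T_i$, hence preserves the JSJ-decomposition; thus $F' \in \diff^*(L)$. Because the isotopy fixes each component $L_i$ setwise, the permutations induced by $F$ and $F'$ agree, so $\Phi(F') = \rho$, giving the desired element of $\diff^*(L)$.

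The main technical point to verify carefully is that the isotopy from $\{F(T_i)\}$ to $\{T_i\}$ in $X$ can be arranged to fix each component of $L$ setwise. This is standard but worth checking: one can first extend an ambient isotopy of $X$ to $S^3$ in a small regular neighborhood of $L$, and then use that the self-diffeomorphisms of each solid-torus neighborhood of $L_i$ that arise are isotopic to ones fixing $L_i$ setwise (using that the relevant mapping class groups act trivially on the core up to isotopy in the cases needed here). This step is really the only obstacle; everything else is a direct consequence of the uniqueness of the JSJ-decomposition.
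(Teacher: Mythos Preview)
Your proposal is correct and follows essentially the same route as the paper: take $F\in\diff(L)$ realizing a given $\rho\in\S(L)$, invoke uniqueness of the JSJ--decomposition to isotope $F$ (relative to $L$) to some $F'\in\diff^*(L)$, and observe that $F'$ induces the same permutation. The paper simply asserts the existence of the isotopy relative to $L$, whereas you spell out the extension-across-a-collar step in more detail; one minor wording issue is that your phrase ``supported in a collar neighborhood of $L$'' in the middle paragraph is not what you mean (such an isotopy could not move the $T_i$), but your final paragraph makes clear you intend an isotopy of $X$ extended over the collar, which is the correct picture.
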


\begin{proof} Given an element in $\S(L)$, there is a diffeomorphism $F\in \diff(L)$ that maps to it.  We have that $F$ is isotopic relative to $L$ to an element $F' \in \diff^*(L)$.  The map $F'$ induces the same permutation of the components of $L$ as does $F$.

\end{proof}

\begin{theorem} In the case that $\diff^*(L)$ acts transitively on the components of $L$,  the action of $\diff^*(L)$ on  $\widehat{\text{Tr}}(L)$ factors through an action of $\S(L)$ on   $\widehat{\text{Tr}}(L)$.
\end{theorem}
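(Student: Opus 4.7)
My plan is to identify the kernel of the homomorphism $\Phi\co \diff^*(L) \to \S(L)$ and show that this kernel acts trivially on $\widehat{\text{Tr}}(L)$; the desired factorization follows immediately.

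First I would verify that the action of $\diff^*(L)$ on $\widehat{\text{Tr}}(L)$ is well defined. An element $F \in \diff^*(L)$ permutes the components $C_i$ and tori $T_i$, inducing an automorphism of $\text{Tr}(L)$. Because $\diff^*(L)$ acts transitively on the components of $L$, the set $\{D_1,\ldots,D_k\}$ is precisely the set of vertices of $\text{Tr}(L)$ containing a component of $L$. Any $F$ permutes this set, and hence preserves the subtree it spans.

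Next, suppose $F$ lies in the kernel of $\Phi$, so that $F(L_i) = L_i$ setwise for every $i$. Each $L_i$ lies in the closure of a unique JSJ-component, so $F$ must fix each $D_j$ as a vertex of $\text{Tr}(L)$. By the preceding theorem, when $\widehat{\text{Tr}}(L)$ has more than one vertex its valence-one vertices are exactly the $D_j$. Thus $F$ restricts to an automorphism of $\widehat{\text{Tr}}(L)$ that fixes every leaf.

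Finally I would invoke the elementary tree fact: any automorphism of a finite tree fixing every leaf fixes every vertex. This follows by iterated pruning — each edge incident to a leaf is fixed since its endpoints are determined, so every neighbor of a leaf is fixed; after removing the leaves, each leaf of the pruned tree had an original leaf as a neighbor and is therefore already known to be fixed, and induction on the number of vertices completes the argument. The degenerate case where $\widehat{\text{Tr}}(L)$ consists of a single vertex is trivial. The only real technical content is the tree-combinatorial step, which is entirely routine; the substantive input is the characterization of leaves of $\widehat{\text{Tr}}(L)$ supplied by the previous theorem.
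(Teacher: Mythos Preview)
Your proof is correct and follows essentially the same approach as the paper. The paper's proof is a single sentence---``An automorphism of a tree is completely determined by its action on the valence one vertices of the tree''---which is exactly the contrapositive of the tree-combinatorial fact you establish by the pruning induction; you have simply unpacked the details the paper leaves to the reader.
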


\begin{proof} An automorphism of a tree is completely determined by its action on the valence one vertices of the tree.  We leave this elementary observation to the reader. 
\end{proof}

\subsection{The structure of  $\widehat{\text{Tr}}(L)$ when $\S(L) = \A_n$  }
In Figure~\ref{fig:tree1} we provide  an example of a labeled tree to serve as a model for the discussion that follows.

\begin{figure}[h]
\labellist
\pinlabel {\text{\Large{$D_1$}}} at  20 20
\pinlabel {\text{\Large{$D_2$}}} at 86 20
\pinlabel {\text{\Large{$D_3$}}} at 152 20
\pinlabel {\text{\Large{$D_4$}}} at 218 20
\pinlabel {\text{\Large{$D_5$}}} at 285 20
\pinlabel {\text{\Large{$D_6$}}} at 349 20
\pinlabel {\text{\Large{$C_1$}}} at 93 148
\pinlabel {\text{\Large{$C_2$}}} at 290 148
\pinlabel {\text{\Large{$C_0$}}} at 188 244
\endlabellist
\includegraphics[scale=.48]{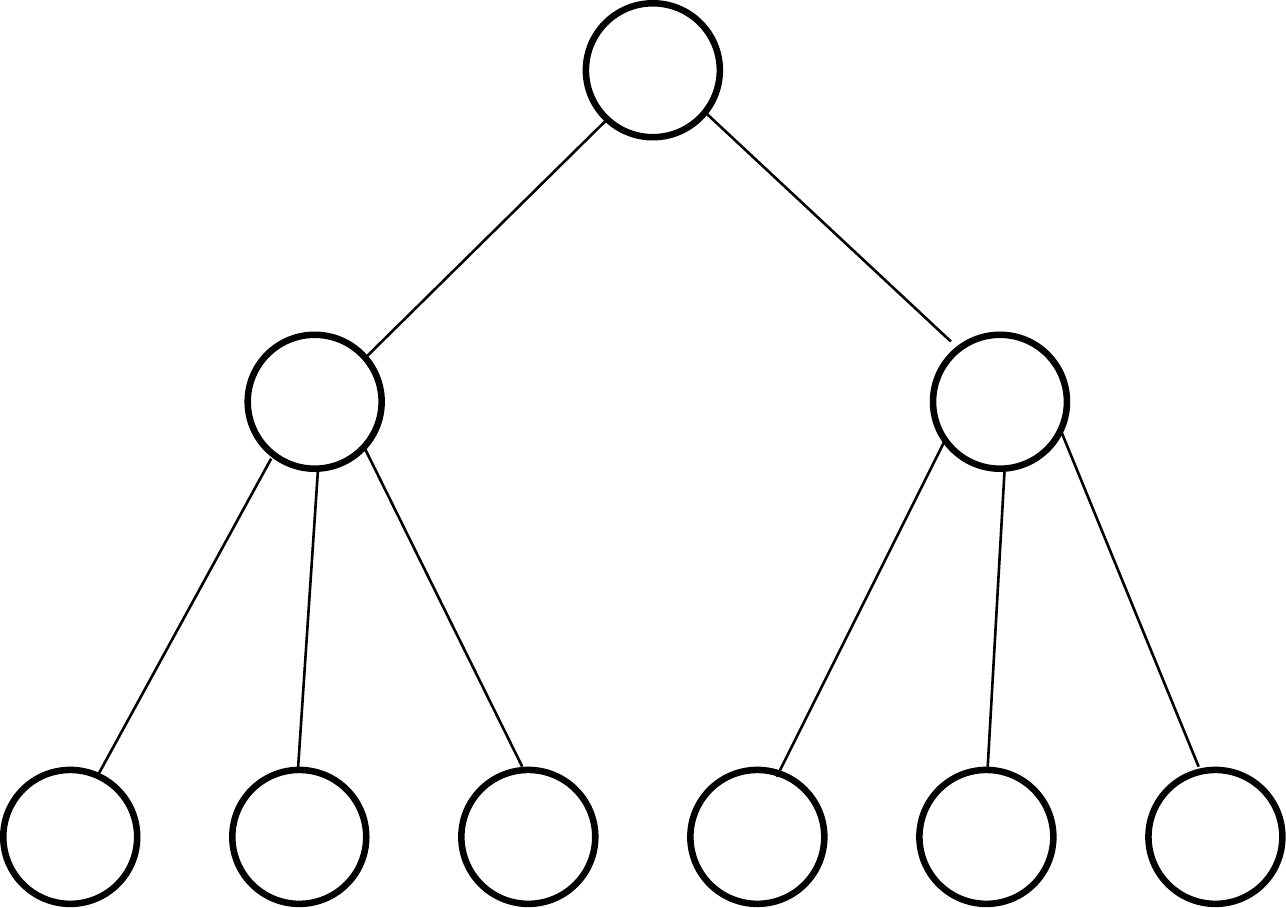}  
\vskip.15in
\caption{Tree diagram for sublink $K$ of $L$ on which $\diff(L)$ acts transitively.}
\label{fig:tree1}
\end{figure}

\begin{lemma}  If   $\S(L) = \A_n$   and   $\widehat{\text{Tr}}(L)$ contains more than one vertex, then each $D_i$ contains exactly one $L_1$ and the number  of vertices in the set   $\{D_i\}$ is $n$.

\end{lemma}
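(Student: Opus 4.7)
The plan is to read off the partition of $\{L_1,\ldots, L_n\}$ defined by the vertices $\{D_1,\ldots,D_k\}$ as a system of imprimitivity (block system) for the natural action of $\A_n$ on $\{1,\ldots,n\}$, and to conclude via primitivity of $\A_n$ that the blocks are singletons.

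First I would unpack the hypotheses using the two preceding theorems. Since $\A_n$ is transitive on $\{1,\ldots,n\}$ for $n\geq 3$, the assumption $\S(L)=\A_n$ implies that $\diff^*(L)$ acts transitively on the components of $L$, so $\widehat{\text{Tr}}(L)$ is defined. Because the tree contains more than one vertex, it has at least two leaves, and those leaves are exactly the $D_i$'s; in particular $k\geq 2$. Moreover the action of $\diff^*(L)$ on $\widehat{\text{Tr}}(L)$ factors through an action of $\A_n$, and since $\A_n$ permutes the components transitively (and each component lies in exactly one $D_j$), the induced action on $\{D_1,\ldots,D_k\}$ is transitive as well. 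It follows that the map $\phi\co \{L_1,\ldots,L_n\}\to\{D_1,\ldots,D_k\}$ sending $L_i$ to the $D_j$ containing it is $\A_n$-equivariant, and its fibers form an $\A_n$-invariant partition of the components into $k$ blocks, all of the same size $n/k$.

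The final step uses the fact that $\A_n$ acts primitively on $\{1,\ldots,n\}$ for every $n\geq 3$ (for $n\geq 4$ the action is even $2$-transitive; for $n=3$ it is regular of prime order, hence primitive). Consequently the only block systems are the trivial ones: a single block of size $n$, or $n$ blocks of size $1$. The hypothesis $k\geq 2$ eliminates the former, so each $D_i$ contains exactly one $L_j$ and $k=n$. The only substantive input is the primitivity of $\A_n$; modulo that, the argument is a direct equivariance check built on the two preceding theorems, and I do not foresee a serious obstacle.
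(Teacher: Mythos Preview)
Your argument is correct and is the same idea as the paper's, just phrased more conceptually. The paper's proof is a one-liner: assuming two distinct vertices $D_1,D_2$ each contain at least two components (say $L_1,L_2\in D_1$ and $L_3,L_4\in D_2$), the $3$--cycle $(1\,2\,3)\in\A_n$ cannot act on $\widehat{\text{Tr}}(L)$ because it would have to send $D_1$ both to $D_1$ (via $L_1\mapsto L_2$) and to $D_2$ (via $L_2\mapsto L_3$). That is exactly a bare-hands witness to the primitivity of $\A_n$ that you invoke abstractly. Your formulation via block systems is cleaner and makes the role of primitivity explicit, and it also transparently covers $n=3$ (where the paper's witness $(1\,2\,3)$ still works since the only nontrivial block size dividing $3$ is $3$ itself, already excluded by $k\ge 2$); but the two arguments are not genuinely different.
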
 

\begin{proof} Suppose that $  D_1$  contains $L_1$ and $L_2$ and   that $D_2$ contains   $L_3$ and $ L_4$.  Then the permutation $(1 2 3) \in \A_n$ does not induce an action on  $\widehat{\text{Tr}}(L)$.

\end{proof}

\begin{theorem}\label{thm:alt2} If $\S(L) = \A_n$, $n\ge 3$, then  $\widehat{\text{Tr}}(L)$  is a rooted tree with either exactly one vertex, $C$,  or with   $n$    vertices of valence one. In the second case,   there is a  unique vertex with valence greater than 2; the tree $\widehat{\text{Tr}}(L)$ is built from that high valence   vertex $C$ by attaching $n$ linear branches, all of the same length.  The vertex $C$ is invariant under the action of $\A_n$ on   $\widehat{\text{Tr}}(L)$.
\end{theorem}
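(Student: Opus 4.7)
The plan is to analyze the structure of $\widehat{\text{Tr}}(L)$ by combining Jordan's theorem on the center of a finite tree with the exceptional algebraic properties of $\A_n$ for $n \ge 3$. If $\widehat{\text{Tr}}(L)$ consists of a single vertex there is nothing to prove; otherwise, the previous lemma identifies its valence-one vertices with the $n$ vertices $D_1,\dots,D_n$, and the preceding theorem supplies a transitive action of $\A_n$ on this leaf set via automorphisms of the tree.

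Next I would locate the distinguished vertex $C$ by invoking Jordan's theorem, which asserts that the center of a finite tree is either a single vertex or a pair of adjacent vertices, and that this center must be preserved by every automorphism. If the center were a pair $\{v_1,v_2\}$, the induced homomorphism $\A_n \to \Z/2$ recording whether they are swapped would have to be trivial: for $n \ge 5$ this is by simplicity, and for $n=3,4$ by direct inspection of the subgroup lattice ($\A_3 \cong \Z/3$ and $\A_4$ both fail to have a subgroup of index two). With both endpoints then fixed, $\A_n$ preserves the two subtrees obtained by deleting the central edge, and transitivity on leaves forces every leaf into a single subtree, so that the other subtree reduces to $\{v_2\}$; but then $v_2$ itself is a leaf fixed by $\A_n$, contradicting transitivity on $n \ge 2$ leaves. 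Hence the center is a single $\A_n$-invariant vertex $C$.

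Finally, I would determine the valence $k$ of $C$ and the shape of the branches. The group $\A_n$ permutes the $k$ branches at $C$, and because each branch necessarily contains at least one leaf and the leaf-to-branch map is $\A_n$-equivariant, transitivity on leaves forces transitivity on branches, so every branch contains the same number $n/k$ of leaves. For $n \ge 5$, simplicity of $\A_n$ together with the classical fact that its smallest proper subgroup has index $n$, combined with the obvious bound $k \le n$, forces $k=n$; for $n=3$ the only option is $k=3$, and for $n=4$ the integrality of $n/k$ with $3 \le k \le 4$ forces $k=4$. In every case each branch contains exactly one leaf, so any branch vertex of valence $\ge 3$ in $\widehat{\text{Tr}}(L)$ would force that branch to contain at least two leaves of $\widehat{\text{Tr}}(L)$, an impossibility. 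Therefore each branch is a linear path from $C$ to its leaf, the transitive $\A_n$-action on branches guarantees all paths have the same length, and $C$ is the unique vertex of valence greater than $2$. The main obstacle I anticipate is the careful case analysis for small $n$, ensuring that the step ``one leaf per branch implies linear branch'' properly accounts for the distinction between leaves of the ambient tree and leaves of a branch regarded as a subtree.
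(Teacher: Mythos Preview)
Your argument is correct and follows the same route as the paper's: locate an $\A_n$--invariant center (the paper's iterative leaf-stripping ``elementary exercise'' is exactly Jordan's algorithm) and rule out the two-vertex case using the absence of index-two subgroups in $\A_n$ for $n\ge 3$. Your treatment is in fact more complete than the paper's sketch---the paper leaves the branch analysis (that $k=n$ and hence each branch is a path of common length) entirely implicit---and your appeal to the minimal-index fact for $\A_n$, $n\ge 5$, is a tidy shortcut that could equally be replaced by a direct $3$-cycle argument in the style of the preceding lemma.
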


\begin{proof}

Figure~\ref{fig:tree3} is a schematic of a tree.  We are asserting that  $\widehat{\text{Tr}}(L)$ is of this form.

\begin{figure}[h]
\labellist
\pinlabel {\text{\Large{$D_1$}}} at  20 20
\pinlabel {\text{\Large{$D_2$}}} at 86 20
\pinlabel {\text{\Large{$D_3$}}} at 152 20
\pinlabel {\text{\Large{$D_4$}}} at 218 20
\pinlabel {\text{\Large{$D_5$}}} at 285 20
\pinlabel {\text{\Large{$C$}}} at 153 242
\endlabellist
\includegraphics[scale=.48]{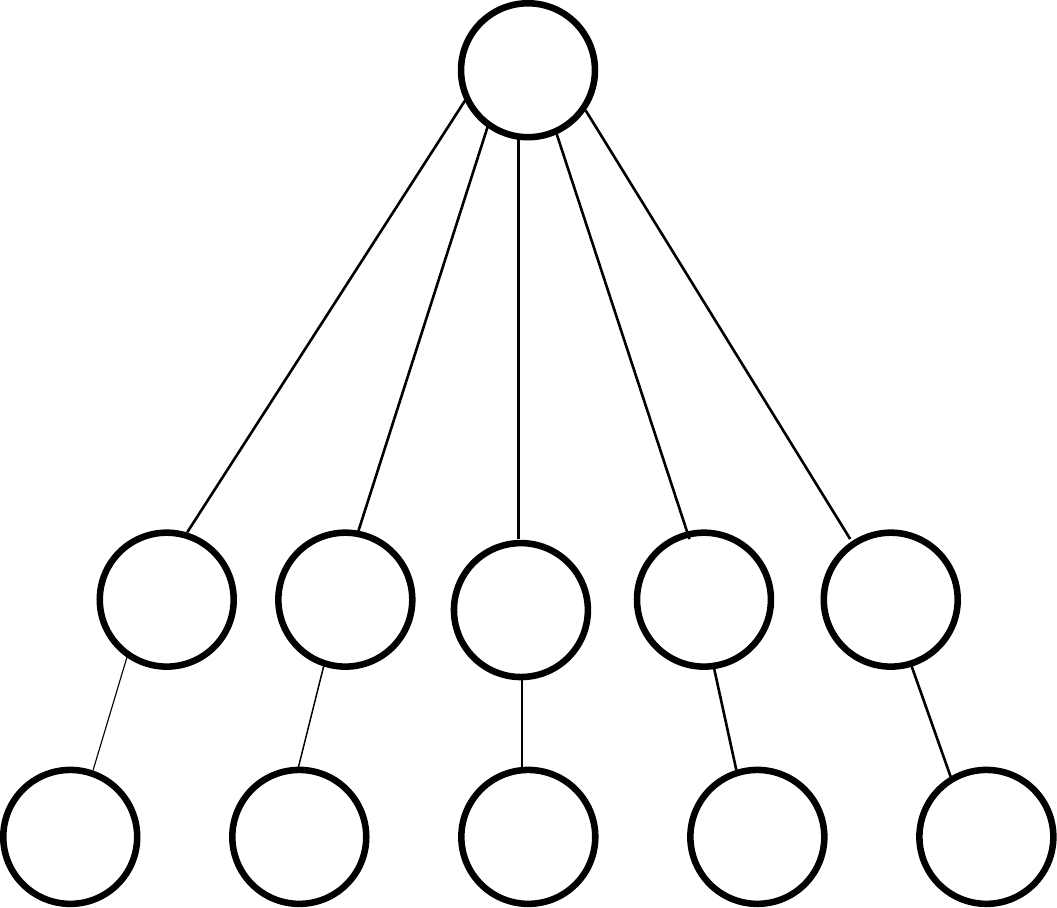}  
\vskip.15in
\caption{Possible tree diagram $\widehat{\text{Tr} } (L)$ for a five component link $L$ on which $\S(L) = \A_5$.}
\label{fig:tree3}
\end{figure}

We have seen that each $D_i$ contains precisely one $L_i$ and these are the valence one vertices of $\widehat{\text{Tr} } (L)$.  
A tree with more than two valence one vertices always contains some vertex with valence greater than 2.  It remains to show that there is a unique such vertex of valence greater than two.  (For an example of the sort of tree we need to rule out, build a tree from two copies of the graph illustrated in Figure~\ref{fig:tree3}  by joining the roots with  single edge.)

An elementary exercise shows that for any tree on which $\A_n$ acts and  for which the action of $\A_n$  on  vertices of valence one is transitive,  there is an invariant vertex or edge:  proceed by induction,  removing all valence one vertices and their adjacent edges from the tree.  

We next observe that in the case that the symmetry group is $\A_n$, there must be an invariant vertex.  The action of the symmetry group of the tree is transitive on its valence one vertices, so if there is an invariant edge, some elements must reverse that edge.  It follows that the subgroup of the symmetry group that does not reverse the edge is index two.  But $\A_n$ does not contain an index 2 subgroup for $n\ge 3$. 

\end{proof}

\subsection{The structure of the core $C$ in the case that $\S(L) = \A_n$.}

Suppose that $\S(L) = \A_n$.  Then by Theorem~\ref{thm:alt2} there is a core $C$ in the JSJ--composition of $L$.  This core is acted on by $\diff^*(L)$.  The boundary of $C$ is the union of two sets of tori, $\{T_1, \ldots , T_n\} \cup  \{S_1, \ldots , S_m\}$.  Each $T_i$ bounds a submanifold $W_i \subset S^3$ that contains the link component $L_i$  and does not contain $C$.   A schematic appears in Figure~\ref{fig:tree2}.  In this diagram we have included extra edges showing   $ \widehat{\text{Tr}}(L)$  might be a proper subtree of   $ {\text{Tr}}(L)$  and that $C$ might have more than $n$ boundary components.

\begin{figure}[h]
\labellist
\pinlabel {\text{\Large{$L_1$}}} at  20 30
\pinlabel {\text{\Large{$L_2$}}} at 86 30
\pinlabel {\text{\Large{$L_3$}}} at 152 30
\pinlabel {\text{\Large{$L_4$}}} at 218 30
\pinlabel {\text{\Large{$L_5$}}} at 285 30
\pinlabel {\text{\Large{$C$}}} at 153 252
\endlabellist
\includegraphics[scale=.48]{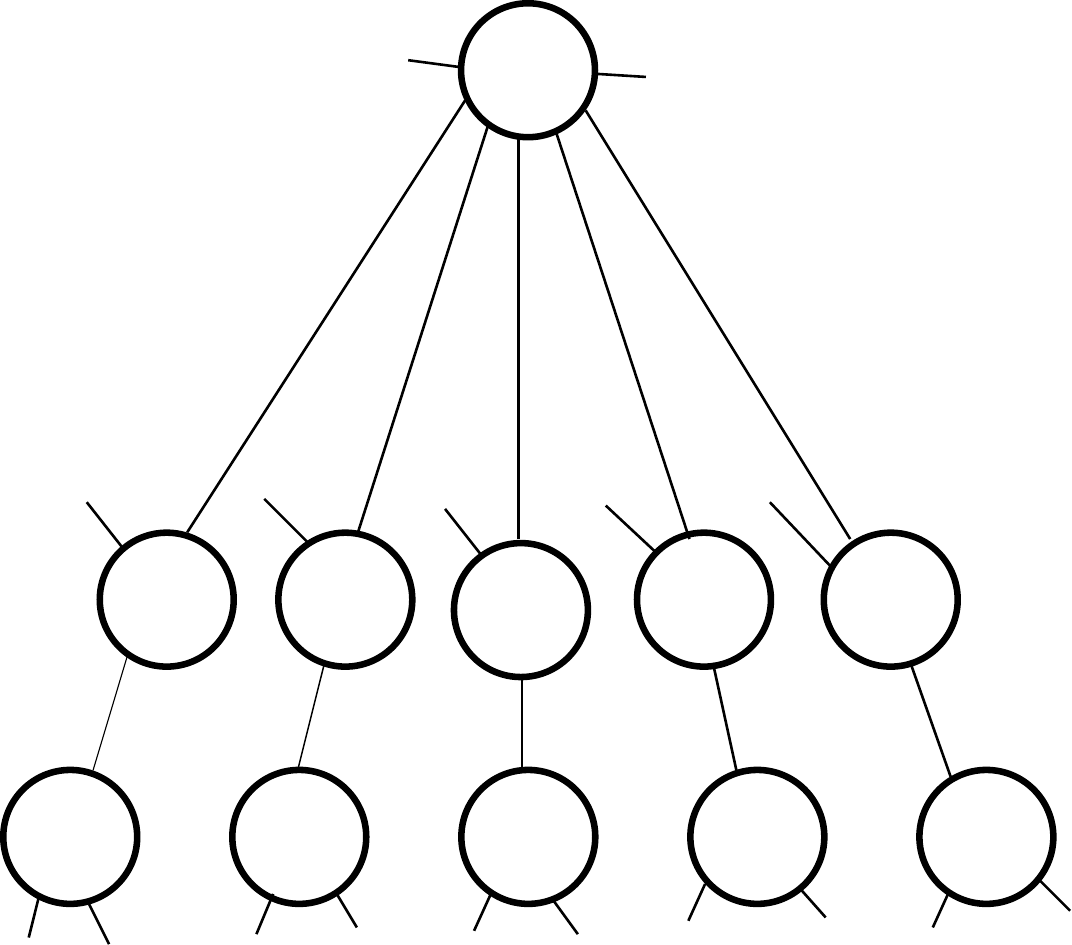}  
\vskip.15in
\caption{Possible tree diagram $\widehat{\text{Tr} } (L)$ for a five component link $L$ on which $\diff^*(L)$ acts transitively.}
\label{fig:tree2}
\end{figure}

Let $\diff(C)$ be the diffeomorphism group of the core $C$.  It contains a subgroup $\diff(C,T)$ that leaves invariant the set of  $T_i$.  This group maps to $\S_n$ via its action on $\{T_i\}$. 

\begin{theorem}  In the case that $\S(L) = \A_n$ with $n\ge 5$, with core $C$, the group $\diff(C,T)$ acts on $\{T_i\}$ as either the  $\A_n$ or $\Sn$.  
\end{theorem}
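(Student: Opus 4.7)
The plan is to construct a commutative diagram
\[
\begin{array}{ccc}
\diff^*(L) & \longrightarrow & \diff(C,T) \\
\downarrow & & \downarrow \\
\S(L) = \A_n & \hookrightarrow & \S_n
\end{array}
\]
in which the top horizontal map is restriction to $C$ and the right vertical map is the action on $\{T_i\}$. Once commutativity is established, the image of $\diff(C,T) \to \S_n$ must contain $\A_n$, and since $\A_n$ has index $2$ in $\S_n$, its only supergroups in $\S_n$ are $\A_n$ and $\S_n$ itself; this is the desired conclusion.

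The restriction map is well-defined because Theorem~\ref{thm:alt2} supplies an invariant core $C$: every $F \in \diff^*(L)$ preserves $C$, so restricts to a self-diffeomorphism $F|_C$ of the three-manifold $C$. To check that $F|_C$ lies in the subgroup $\diff(C,T)$, note that the tori $T_1, \ldots , T_n$ are distinguished among the boundary components of $C$ by the condition that $T_i$ bounds the region $W_i \subset S^3$ (disjoint from $C$) that contains the link component $L_i$. If $F$ permutes the components of $L$ by some $\rho \in \A_n$, then $F(W_i)$ is a component of $S^3 \setminus F(T_i)$ disjoint from $C$ and containing $L_{\rho(i)}$, so $F(W_i) = W_{\rho(i)}$, and hence $F(T_i) = T_{\rho(i)}$. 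Thus the restriction lands in $\diff(C,T)$, and the action of $F|_C$ on $\{T_i\}$ agrees with the action of $F$ on $\{L_i\}$ under the bijection $T_i \leftrightarrow L_i$.

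With commutativity in hand, the image of $\diff^*(L) \to \S_n$, which by the earlier identification equals $\S(L) = \A_n$, factors through $\diff(C,T) \to \S_n$; therefore this latter image contains $\A_n$, and the elementary subgroup observation finishes the argument. The only point requiring genuine care is the verification that $F|_C$ permutes $\{T_i\}$ by the same permutation $\rho$ by which $F$ permutes $\{L_i\}$, and this is forced by the containment $L_i \subset W_i$ together with the invariance of $C$. The hypothesis $n \ge 5$ is not used directly here; it is inherited from Theorem~\ref{thm:alt2} and the structural results guaranteeing the existence of the invariant core and the transitivity of the action on $\{L_i\}$.
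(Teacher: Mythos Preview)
Your proof is correct and follows the same approach as the paper's: the paper's two-sentence argument simply asserts that ``it is clear that the action contains $\A_n$'' and then invokes the index-two fact, and what you have done is carefully unpack that first clause by verifying that restriction to $C$ carries $\diff^*(L)$ into $\diff(C,T)$ and intertwines the two $\S_n$--actions. One small remark: the hypothesis $n\ge 5$ is in fact not needed even indirectly here (Theorem~\ref{thm:alt2} only requires $n\ge 3$, and the supergroup observation holds for all $n$), so your final sentence slightly overstates where the bound enters.
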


\begin{proof}  It is clear that the action contains $\A_n$.  The only subgroups of $\Sn$ that contain $\A_n$ are $\A_n$ and $\Sn$.
\end{proof} 

Notice that it might happen that there are elements of $\diff(C,T)$ that do not map to elements of $\A_n$; it is possible that not every action on $C$ extends to $S^3$.


\section{Reembeddings}\label{sec:reembed}

Reembeddings appear in two different ways in our proof.  In the case of $C$ hyperbolic, we embed $C$ in $S^3$ as a link complement.  In the Seifert fibered case, we embed $C$ into a closed Seifert fibered space as the complement of a set of regular fibers.  In this section, we describe the embedding into $S^3$.

In the previous section, some of the (torus) boundary components of the core  $C$ were denoted $T_i$.   We will now see that by using reembeddings we can view these $T_i$, along with the other boundary components $S_i$ of $C$, as peripheral tori  for a link in $S^3$.  This is presented in~\cite{MR2300613}, where Budney gave a reembedding theorem for submanifolds of $S^3$.  Here we present a slightly enhanced version of that result, keeping track of boundary curves.  First we set up some notation.

Let $X\subset S^3$ be a compact connected submanifold with one of its boundary components a torus $T$.   The complement of $T$ consists of two spaces, $Y_1$ and $Y_2$.  We have $H_1(Y_1) \cong \Z \cong H_1(Y_2)$.  We assume $X \subset Y_1$.   When needed, we will write these as $Y_1(X,T)$ and $Y_2(X,T)$.

We have that $\ker( H_1(T) \to H_1(Y_1)) \cong \Z$.  The generator can be represented  by a simple closed curve we denote $l$.   Similarly, a representative of  $\ker( H_1(T) \to H_1(Y_2)) \cong \Z$ is denoted $m$.  There is no natural orientation for these choices.  However, we can assume that they are oriented so that  the intersection number of $m$ and $l$ is 1 with respect to the orientation of $T$ viewed as the boundary of $Y_1$.  We can also assume that $m$ and $l$ intersect transversely in exactly one point.  With this setup, we have the following.

\begin{theorem} There exists an orientation preserving embedding $F\co X \to S^3$ such $ F(T)$ is the boundary of a   tubular neighborhood of a knot in $S^3$ having meridian $F(m)$ and longitude $F(l)$.  
\end{theorem}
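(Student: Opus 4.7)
My plan is to reduce the problem to a case analysis, using the classical fact that any torus in $S^3$ has a solid torus on at least one side.

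First I would establish this structural fact. Since $\pi_1(S^3)$ is trivial, $T$ is compressible in $S^3$, so there is an embedded disk $D \subset S^3$ whose boundary is an essential simple closed curve on $T$. The disk $D$ lies in one of $Y_1, Y_2$; call this side $Y_i$. By Alexander's theorem, $\overline{Y_i}$ is irreducible, so compressing $T$ along $D$ inside $\overline{Y_i}$ yields a $2$-sphere bounding a $3$-ball. Hence $\overline{Y_i}$ is a solid torus whose meridian is the generator of $\ker(H_1(T) \to H_1(Y_i))$.

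Case A: $\overline{Y_2}$ is the solid torus. Its meridian is then isotopic to $m$, and $\overline{Y_2} = N(K)$ is the tubular neighborhood of its core knot $K$. The defining property of $l$ (that it bounds in $\overline{Y_1} = S^3 \setminus N(K)$) identifies $l$ as the Seifert longitude of $K$. So $F = \mathrm{id}$ satisfies the theorem.

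Case B: $\overline{Y_1}$ is the solid torus. Here the roles of $m$ and $l$ are swapped in the original embedding, so we re-embed. I would choose an embedding $\phi \co Y_1 \to S^3$ realizing $Y_1$ as a tubular neighborhood $N(U)$ of an unknot $U$, with the framing selected so that $\phi(m)$ is the canonical Seifert longitude of $U$. Such a choice is available because the embeddings of the abstract solid torus $Y_1$ as $N(U)$ are parameterized up to isotopy by $\Z$, with different framings realizing the image $\phi(m) = \lambda_U + k\mu_U$ for each $k \in \Z$ (where $\mu_U, \lambda_U$ are the canonical meridian and longitude of $U$); take $k = 0$. Set $F = \phi|_X$. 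The complement $W = S^3 \setminus N(U)$ is itself the tubular neighborhood of a dual unknot $U'$, with $F(T) = \partial W$. By construction, $F(m)$ bounds a disk in $W$, so it is the meridian of $U'$; and $F(l)$, being the meridian of $U$, bounds a disk in $N(U) = S^3 \setminus W$, so it is the longitude of $U'$.

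The main obstacle is the framing bookkeeping in Case B: one must confirm that the integer $k$ making $\phi(m)=\lambda_U$ is realized by an actual embedding of $Y_1$ as $N(U)$. While this is classical, the abstract curve $m$ on $\partial Y_1$ has no canonical image under the various embeddings of $Y_1$ as $N(U)$, so the framing freedom must be invoked explicitly. A secondary point is the initial structural dichotomy, but it follows directly from irreducibility and Alexander's theorem.
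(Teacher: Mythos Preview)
Your proof is correct and follows the same overall strategy as the paper: both begin with the fact that an embedded torus in $S^3$ bounds a solid torus on one side, and Case~A is handled identically by taking $F$ to be the identity. In Case~B the paper proceeds slightly differently: rather than embedding $Y_1$ concretely as a tubular neighborhood of an unknot and adjusting the framing so that $\phi(m)=\lambda_U$, it abstractly glues a fresh solid torus $W'$ to $Y_1$ along $T$ with the meridian of $W'$ identified to $m$ and the longitude to $l$, and then observes that the resulting closed manifold is a union of two solid tori with $H_1=0$, hence $S^3$. The paper's gluing sidesteps your framing bookkeeping, while your version avoids the recognition step for $S^3$; the two arguments are essentially the same idea in different packaging.
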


\begin{proof}
An embedded torus in $S^3$ bounds (on one side or the other) a solid torus which we denote $W$.   If $Y_2 = W$, then  $m$ is the meridian of $W$ and $F$ can be taken to be the identity.  

If $Y_1 = W$, then form the   boundary union $Z = Y_1 \cup W'$, where $W'$ is a solid torus, attached so that its meridian is identified with $m$ and its longitude is identified with $l$.  Then $Z$ is the union of two solid tori and the choice of identification ensures that $H_1(Z) = 0$.  Thus, $Z \cong S^3$.   
\end{proof}

\begin{corollary} \label{cor:embed2}Suppose that $X \subset S^3$ is a compact manifold with boundary a union of tori $\{T_1, \ldots , T_k\}$.  There exists a link $L = \{L_1, \ldots , L_k\}$ and an orientation preserving homeomorphism  $F\co X\to  S^3 \setminus \nu(L)$, where $\nu(L)$ is an open tubular neighborhood.  Furthermore, it can be assumed that $F$ preserves meridians and longitudes.\end{corollary}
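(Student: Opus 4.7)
The plan is to apply the theorem iteratively, handling one boundary torus at a time, while verifying that each reembedding preserves the meridian--longitude data at the remaining tori.

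First I would observe a preliminary geometric fact: for distinct boundary tori $T_i, T_j$ of $X$, the exterior region $Y_2(T_j)$ is contained in $Y_1(T_i)$. Indeed, since $T_j$ is disjoint from $T_i$ and lies inside $X \subset Y_1(T_i)$, the connected region $Y_2(T_j)$, which has $T_j$ in its boundary, cannot cross $T_i$ and so remains on the $Y_1(T_i)$ side. Consequently the regions $Y_2(T_1), \ldots, Y_2(T_k)$ are pairwise disjoint, and each $Y_2(T_j)$ together with $T_j$ itself lies inside $Y_1(T_i)$ for every $i \ne j$.

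Next I would proceed by induction on $k$, with the base case $k=1$ being the theorem. For the inductive step, apply the theorem to $T_k$ to produce a reembedding $F_k \co X \to S^3$ such that $F_k(T_k)$ bounds a standard tubular neighborhood with meridian $F_k(m_k)$ and longitude $F_k(l_k)$. In either Case A or Case B of the theorem's proof, $F_k$ is the identity on $Y_1(T_k)$, and hence fixes each $T_j$ for $j<k$ together with its exterior $Y_2(T_j) \subset Y_1(T_k)$. This immediately preserves the meridian $m_j$ as the curve bounding in $Y_2(T_j)$. For the longitude, a Mayer--Vietoris comparison of the old $Y_1(T_j) = A \cup_{T_k} Y_2(T_k)$ with the new $Y_1(T_j) = A \cup_{T_k} W'$, where $A = Y_1(T_j) \cap Y_1(T_k)$ and $W'$ is the replacement solid torus, identifies their first homologies, since $Y_2(T_k)$ and $W'$ both have $H_1 \cong \Z$ with the same meridional kernel generated by $m_k$ on the common boundary $T_k$. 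Because $l_j$ lies on $T_j \subset A$ and was null-homologous in the old $Y_1(T_j)$, the isomorphism forces it to remain null-homologous in the new $Y_1(T_j)$, so $l_j$ is still the preferred longitude.

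With the meridian--longitude data at $T_1, \ldots, T_{k-1}$ intact, I would apply the inductive hypothesis to $F_k(X)$ with its remaining boundary tori $F_k(T_1), \ldots, F_k(T_{k-1})$. The handling of $T_k$ by $F_k$ is not disturbed by the subsequent reembeddings, since these act trivially outside their respective $Y_2(T_j)$'s, which are disjoint from the new exterior $W'$ of $F_k(T_k)$ by the disjointness established at the outset. The composite of all reembeddings is the desired orientation-preserving homeomorphism $F$ onto the complement of a link $L$ whose components are the cores of the final solid tori. The main obstacle I expect is the Mayer--Vietoris verification that the longitude is preserved; once the surgery replacing $Y_2(T_k)$ with $W'$ is recognized as homologically trivial in the sense that both pieces have the same first homology and agree on the meridional kernel over $T_k$, the induction proceeds straightforwardly.
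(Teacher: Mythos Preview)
Your proposal is correct and follows the approach the paper implicitly intends: the corollary is stated without proof, as an immediate consequence of iterating the single--torus theorem, and your argument supplies exactly those details. In particular, your Mayer--Vietoris verification that the longitude $l_j$ remains null--homologous after replacing $Y_2(T_k)$ by the solid torus $W'$ is the right way to justify the clause ``$F$ preserves meridians and longitudes,'' and it goes through because $H_2(Y_2(T_k)) = 0 = H_2(W')$ and both pieces have $H_1 \cong \Z$ with the same meridional kernel on $T_k$.

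One small point of phrasing: when you ``apply the inductive hypothesis to $F_k(X)$ with its remaining boundary tori $F_k(T_1), \ldots, F_k(T_{k-1})$,'' note that $F_k(X)$ still has $k$ boundary components. The clean fix is to fill $T_k$ with the solid torus $W'$ to form $X' = F_k(X) \cup W'$, apply the inductive hypothesis to $X'$ (which genuinely has $k-1$ boundary tori), and then remove the core $L_k$ of $W'$ at the end. Your disjointness observation guarantees that the subsequent reembeddings fix $W'$, so $T_k$ still bounds a standard solid torus with meridian $m_k$; the same Mayer--Vietoris argument, applied with the roles of $j$ and $k$ reversed, confirms that $l_k$ remains the longitude as well.
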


\begin{corollary} \label{cor:embed3} With $X \subset S^3$  and $L$  as in Corollary~\ref{cor:embed2},  suppose that a diffeomorphism $g\co S^3 \to S^3$ satisfies $g(X) = X$.   Then the diffeomorphism of $F(X)$ given as the composition $F \circ g \circ F^{-1}$ extends to a diffeomorphism of $(S^3, L)$.

\end{corollary}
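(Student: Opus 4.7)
The plan is to build the extension by directly gluing in diffeomorphisms of the solid tori $\nu(L_i)$, using the fact that $F$ preserves meridian--longitude data on the boundary tori.

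First, I would set up the restriction carefully. Let $h \defeq F \circ g \circ F^{-1} \co F(X) \to F(X)$, so $h$ is an orientation preserving diffeomorphism of $S^3 \setminus \nu(L)$. Since $g$ permutes the boundary tori $\{T_1, \ldots, T_k\}$ of $X$ by some $\sigma \in \S_k$, and $F$ maps each $T_i$ to the peripheral torus $\partial \nu(L_i)$, the map $h$ restricts to diffeomorphisms $h_i \co \partial \nu(L_i) \to \partial \nu(L_{\sigma(i)})$.

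Next, I would use the meridian preservation. On $T_i \subset \partial X$, the curve $m_i$ described in Section~\ref{sec:reembed} is characterized as the generator of the kernel of inclusion into the component of $S^3 \setminus T_i$ disjoint from $X$; since $g$ is a global diffeomorphism of $S^3$ carrying $X$ to $X$, it carries the meridian $m_i$ of $T_i$ to (a curve isotopic to) the meridian $m_{\sigma(i)}$ of $T_{\sigma(i)}$, up to sign. By the hypothesis of Corollary~\ref{cor:embed2} that $F$ preserves meridians and longitudes, the curve $F(m_i)$ is the actual meridian of $L_i$ in $S^3$, namely the curve on $\partial \nu(L_i)$ that bounds a disk inside $\nu(L_i)$. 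It follows that $h_i$ carries the meridian of $\nu(L_i)$ onto the meridian of $\nu(L_{\sigma(i)})$ (again up to sign).

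Finally, I would extend across the solid tori. A diffeomorphism of boundary tori $\partial D^2 \times S^1 \to \partial D^2 \times S^1$ that sends meridian to meridian extends to a diffeomorphism of $D^2 \times S^1$; picking any identification $\nu(L_i) \cong D^2 \times S^1$ in which $L_i$ is the core $\{0\} \times S^1$ and the meridian is $\partial D^2 \times \{\mathrm{pt}\}$, the standard coning extension of $h_i$ yields a diffeomorphism $\widetilde{h}_i \co \nu(L_i) \to \nu(L_{\sigma(i)})$ sending core to core. Gluing $h$ together with the $\widetilde{h}_i$ produces a diffeomorphism $H \co S^3 \to S^3$ with $H(L) = L$ as an unordered set, and by construction $H(L_i) = L_{\sigma(i)}$, so $H$ is a diffeomorphism of the pair $(S^3, L)$ that restricts to $h$ on $F(X)$.

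The only point that needs care is confirming that the boundary map $h_i$ really carries the geometric meridian of $\nu(L_i)$ onto that of $\nu(L_{\sigma(i)})$; everything else is a routine gluing. That step, however, is precisely what is supplied by the meridian/longitude preservation built into Corollary~\ref{cor:embed2}, so I do not expect any serious obstacle.
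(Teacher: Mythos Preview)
Your argument is correct and is exactly the intended one: the paper states this corollary without proof, and your extension-over-solid-tori argument via meridian preservation is the natural way to supply the details. In particular, your observation that the globality of $g$ on $S^3$ is what forces $g$ to carry meridians to meridians is precisely the content of the paper's remark immediately following the corollary (``It is essential here that the diffeomorphism of $X$ extends to $S^3$''); that is the one nontrivial point, and you have handled it correctly.
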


\noindent{\bf Note.} Not every diffeomorphism of $X$ determines a diffeomorphism of $L$.  It is essential here that the diffeomorphism of $X$ extends to $S^3$.

\subsection{Summary theorem}

\begin{theorem}  Suppose that $\S(L) = \A_n$.  Then there is a link $( L'_1, \ldots, L'_n, J_1, \ldots J_m)$ with complement diffeomorphic to  $C$  and that is either hyperbolic or Seifert fibered.  The mapping class group of this link has a subgroup that preserves $(L_1',  \ldots L_n')$.  The image of this subgroup in $\Sn$ is either $\A_n$ or $\Sn$.
\end{theorem}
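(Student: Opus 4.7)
My plan is to combine Theorem~\ref{thm:alt2} with Corollaries~\ref{cor:embed2} and~\ref{cor:embed3} to turn the JSJ-core $C$ into a link complement in $S^3$ that carries the $\A_n$-symmetry. The starting point is that, by Theorem~\ref{thm:alt2}, some vertex $C$ of $\widehat{\text{Tr}}(L)$ is fixed by the action of $\A_n$, so every $g \in \diff^*(L)$ satisfies $g(C) = C$ as a subset of $S^3$. Because $C$ is a single piece of the JSJ-decomposition of $S^3 \setminus L$, it is automatically hyperbolic or Seifert fibered, which handles the geometric-type clause for free.

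Next I would reembed. Split $\partial C$ as $\{T_1,\ldots,T_n\} \cup \{S_1,\ldots,S_m\}$, where the $T_i$ are precisely the JSJ-tori separating $C$ from the submanifolds $W_i$ containing the components $L_i$. Corollary~\ref{cor:embed2} produces an orientation preserving diffeomorphism $F \co C \to S^3 \setminus \nu(L'\cup J)$ for some link $(L'_1, \ldots, L'_n, J_1, \ldots, J_m)$ in $S^3$, with meridians and longitudes preserved. I would label the new link so that $F(T_i) = \partial\nu(L'_i)$ and $F(S_j) = \partial\nu(J_j)$. This constructs the required link, whose complement is diffeomorphic to $C$.

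For the symmetry subgroup, I would use each $g \in \diff^*(L)$ to produce a diffeomorphism of $(S^3, L'\cup J)$. Since $g$ is a diffeomorphism of $S^3$ with $g(C) = C$, Corollary~\ref{cor:embed3} applies and yields an extension $\tilde g$ of $F \circ g|_C \circ F^{-1}$ to a diffeomorphism of $(S^3, L'\cup J)$. The assignment $g \mapsto \tilde g$ produces a subgroup $H$ of the mapping class group of $(L'\cup J)$. Because $\diff^*(L)$ permutes the $T_i$ according to the $\A_n$-action on $\{L_i\}$ (this is the content of $\S(L) = \A_n$), the extension $\tilde g$ permutes the $L'_i$ by the same permutation, so $H$ preserves the sublink $(L'_1,\ldots,L'_n)$ setwise and its image in $\Sn$ contains $\A_n$. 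Since $\A_n$ is maximal in $\Sn$ for $n \ge 5$, that image is either $\A_n$ or $\Sn$; alternatively, this last step follows immediately from the preceding theorem about $\diff(C,T)$.

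The main bookkeeping obstacle is ensuring the index correspondence: if $g$ induces the permutation $\rho$ on $\{L_i\}$, then $\tilde g$ must induce the same $\rho$ on $\{L'_i\}$. This is forced by defining $L'_i$ from the labeling of the $T_i$ and noting that $g|_C$ sends $T_i$ to $T_{\rho(i)}$, since $T_i$ is a purely topological feature of the pair $C \subset S^3$, namely the JSJ-torus separating $C$ from the piece $W_i$ containing $L_i$. Once this index matching is set up, the remainder of the argument is routine.
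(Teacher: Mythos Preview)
Your reembedding argument is correct and in fact more explicit than the paper's, which simply says the conclusion ``follows from the previous results'' once a JSJ-decomposition is in hand. The bookkeeping you describe---matching $T_i$ with $L'_i$ and tracking the induced permutation through Corollary~\ref{cor:embed3}---is exactly what is needed and is sound.

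However, you have skipped the one step the paper actually proves here: that $L$ is non-splittable. All of Section~\ref{sec:jsw-trees}, including Theorem~\ref{thm:alt2} and the very definition of $\widehat{\text{Tr}}(L)$, is predicated on $L$ being non-splittable so that the JSJ-decomposition of $S^3\setminus L$ exists. The hypothesis $\S(L)=\A_n$ does not give this for free. The paper's proof is almost entirely devoted to ruling out a splitting: if $L$ decomposes into identical non-split sublinks $D_1,\ldots,D_k$ sitting in disjoint balls, one argues according to the number $m$ of components in each $D_i$. If $m=1$ then $L$ is a split union of copies of a single knot and $\S(L)=\Sn$, contradicting $\S(L)=\A_n$; if $1<m<n$ then the block structure forces any permutation sending a component of $D_1$ to a component of $D_2$ to carry all of $D_1$ to $D_2$, which not every element of $\A_n$ does. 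Only $m=n$ survives, and that is the non-split case. Without this argument your invocation of Theorem~\ref{thm:alt2} is unjustified, and the core $C$ may not exist at all.
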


\begin{proof}  To prove this using the previous results, we need to show that a JSJ-decomposition exists.  That is, that $L$ is non-splittable.   If $L$ does split, it splits as  the union non-split sublinks, say $D_1, \ldots , D_k$, where each $D_i$ is contained in a ball that does not intersect the other $D_j$.  The transitivity of the $\A_n$--actions implies that the $D_i$ are identical links.  Thus, we can write $D_i = \{ D_i^1, \ldots , D_i^m\}$ for some $m$ that is independent of $i$.  

If $m =1$, then $L$ is consists of $n$ copies of a knot $J$, each copy in a separate ball.   In this case, the symmetry group would be $\Sn$.   If $m =n$, then we are in the nonsplit case, as desired.

Finally, if $1 < m < n$, then any element of $\calm$ that carries $D_1^1$ to $D_2^1$, must carry $D_1^2$ to some $D_2^i$.  But not every element of $\A_n$ behaves in this way.

\end{proof}

To complete the proof that $\A_n$, $n \ge 6$, is not the intrinsic symmetry group of any link, we consider the hyperbolic and Seifert fibered cases separately.  


\section{The case of $C$ hyperbolic.}\label{sec:hyperbolic}

We use the notion of {\it core} as in the previous section.  

\begin{theorem}If $\A_n \subset \S(L)$  and the core $C$ is hyperbolic, then  some finite subgroup of $SO(4)$ contains a finite subgroup having $\A_n$ as a quotient.
\end{theorem}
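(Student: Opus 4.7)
The plan is to combine Mostow rigidity with the geometrization of finite group actions on $S^3$ to pass from the (a priori) large diffeomorphism group to a finite linear group.

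First I apply the summary theorem at the end of Section~\ref{sec:reembed} to realize $C$ as the complement of a link $(L_1',\ldots,L_n', J_1,\ldots, J_m) \subset S^3$ and to obtain a subgroup $H'$ of the link mapping class group that preserves the subcollection $\{L_i'\}$ and whose image in $\Sn$ is either $\A_n$ or $\Sn$. Because $C$ is hyperbolic of finite volume, Mostow rigidity identifies the full mapping class group of $C$ with the finite isometry group $\mathrm{Isom}(C)$, so in particular $H'$ is finite. Moreover, by Corollary~\ref{cor:embed3} and the meridian/longitude bookkeeping in Corollary~\ref{cor:embed2}, each isometry of $C$ in $H'$ can be extended to a diffeomorphism of the pair $(S^3, L' \cup J)$.

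Next, I promote the finite group of mapping classes to a genuine finite group of smooth transformations of $S^3$ preserving the link. This is the content of the geometrization of finite group actions on $S^3$, proved by Dinkelbach--Leeb in~\cite{MR2178962} using Thurston's orbifold theorem and Perelman's work: every finite group of mapping classes of a link pair in $S^3$ whose complement is geometric (here, hyperbolic) is realized by a genuine smooth finite group action on $S^3$ preserving the link setwise. Applied here, it produces a finite group $\widetilde H \subset \mathrm{Diff}^+(S^3)$ preserving $L' \cup J$, mapping isomorphically to $H'$, and in particular permuting $\{L_i'\}$ with image $\A_n$ or $\Sn$ in $\Sn$.

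Then I linearize. The same theorem (or, equivalently, the classical elliptization/geometrization of smooth finite actions on $S^3$) asserts that every finite smooth group action on $S^3$ is smoothly conjugate to an orthogonal one. Thus after conjugation, $\widetilde H$ sits inside a finite subgroup $G \subset \SO(4)$. Now take $H$ to be the preimage in $\widetilde H$ of $\A_n \subset \Sn$ under the permutation representation on $\{L_i'\}$. Then $H \subset G \subset \SO(4)$ is finite and surjects onto $\A_n$, proving the claim.

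The main obstacle is the second step: invoking~\cite{MR2178962} to upgrade the mapping class group action on the link pair to a smooth finite group action of $S^3$, while preserving the split of the boundary tori into those associated to $\{L_i'\}$ and those associated to $\{J_i\}$. Once this lifting is in hand, both Mostow rigidity (to guarantee finiteness) and the linearization of smooth finite actions on $S^3$ (to place $G$ inside $\SO(4)$) are standard applications, and the group-theoretic extraction of $H$ from the image in $\Sn$ is immediate.
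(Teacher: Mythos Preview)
Your proof follows the same route as the paper's: reembed $C$ as a link complement, use Mostow rigidity to replace the relevant mapping classes by a finite group of isometries of $C$, extend that group to a finite subgroup of $\mathrm{Diff}(S^3)$, and then invoke geometrization of finite group actions on $S^3$ to land in $\SO(4)$. One clarification on the step you flag as the main obstacle: promoting the finite isometry group of $C$ to a genuine smooth action on $S^3$ does not itself require~\cite{MR2178962}; since the isometries act by Euclidean isometries on each cusp torus and preserve the meridian slopes (being isotopic to link diffeomorphisms), one can simply perform equivariant Dehn filling to obtain the action on $S^3$. The reference~\cite{MR2178962} is then the right citation for your linearization step, and the paper's own proof is comparably terse at this transition, asserting directly that the isometry subgroup $H''$ is a finite subgroup of $\mathrm{Diff}(S^3)$.
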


\begin{proof}

For each element $\phi \in \diff(C, \partial C)$ that extends to $S^3$, let $\phi'$ denote an isometry that is isotopic to $\phi$ relative the boundary.  Note that the actions of $\phi$ and $\phi'$ on the finite set of components   $\{\partial C\}$ are the same.  The set of $\phi'$ generates a subgroup of Isom($C$).  This is necessarily a finite group, $H$.  The group $H$ contains the subgroup $H' \subset H$ that leaves invariant the set $\{L_1', \ldots , L_n'\}$.   The image of $H'$ in $\Sn$ contains $\A_n$.  By restricting to a further subgroup $H''$, we can assume the image is precisely $\A_n$.

By results such as~ \cite{MR2416241,  MR2178962}, any finite subgroup of Diff($S^3$), such as $H''$,  is    isomorphic to a subgroup of $SO(4)$.                                                                                                                                                                                                                                                                                                                                                                                                                                                                                                                                                       

\end{proof}

\begin{corollary} If $\A_n \subset \S(L)$ then $n \le 5$.
\end{corollary}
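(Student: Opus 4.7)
The plan is to combine the preceding theorem with the classification of finite subgroups of $SU(2)$. Applying the theorem, it suffices to establish the purely group-theoretic statement: for $n\ge 6$, the alternating group $\A_n$ is not a quotient of any finite subgroup of $SO(4)$.

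First I would pass to the spin cover. The standard covering homomorphism $SU(2)\times SU(2) \to SO(4)$ has kernel of order $2$, so any finite subgroup $G\subset SO(4)$ lifts to a finite subgroup $\widetilde{G}\subset SU(2)\times SU(2)$ surjecting onto $G$. Any quotient of $G$ is then also a quotient of $\widetilde{G}$, so the entire argument can be carried out inside $SU(2)\times SU(2)$.

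Next I would exploit the two coordinate projections. Let $G_i=\pi_i(\widetilde{G})\subset SU(2)$, and form the short exact sequence
\[
1 \to K \to \widetilde{G} \to G_1 \to 1,
\]
where $K=\widetilde{G}\cap(\{1\}\times SU(2))$ embeds into $G_2$. Since any subgroup of a finite subgroup of $SU(2)$ is itself a finite subgroup of $SU(2)$, both $K$ and $G_1$ lie on the classical ADE list: cyclic, binary dihedral, binary tetrahedral, binary octahedral, or binary icosahedral. A direct inspection of this short list shows that the only nonabelian simple composition factor that appears is $\A_5$, arising solely from the binary icosahedral group. By Jordan--H\"older applied to the exact sequence above, every composition factor of $\widetilde{G}$ is either cyclic of prime order or isomorphic to $\A_5$.

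Finally, because $\A_n$ is nonabelian simple for $n\ge 5$, a surjection $\widetilde{G}\twoheadrightarrow\A_n$ forces $\A_n$ to occur as a composition factor of $\widetilde{G}$, and hence to be isomorphic to $\A_5$. Thus $n=5$, ruling out $n\ge 6$, as required. The only real subtlety lies in setting up the two successive reductions (the spin lift and the coordinate projection) so that Jordan--H\"older applies cleanly; once the exact sequence is in hand, the conclusion is an immediate consequence of the ADE classification.
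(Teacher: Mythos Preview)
Your argument is correct, and it reaches the same endpoint as the paper by a genuinely different route.  Both proofs reduce to the classification of finite rotation groups, but the mechanics diverge.  The paper passes \emph{down} from $SO(4)$ to $SO(3)\times SO(3)$ via the two--to--one map $q$, shows that a surjection $H\to A$ onto a nonabelian simple group descends to $q(H)$, and then uses a Goursat--style argument on the normal subgroup $F=G\cap(SO(3)\times\{1\})$ to conclude that $A$ is already a quotient of a subgroup of one coordinate factor $G_i\subset SO(3)$; the list of finite subgroups of $SO(3)$ finishes the job.  You instead lift \emph{up} to $SU(2)\times SU(2)$, use one projection to produce an exact sequence whose kernel and image are both finite subgroups of $SU(2)$, and invoke Jordan--H\"older to pin down the possible composition factors from the ADE list.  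Your approach is a bit more structural and sidesteps the explicit case split on $\phi'(F)$; the paper's approach is slightly more elementary in that it never appeals to Jordan--H\"older and works directly with the surjection rather than with composition series.  Either way the conclusion is the same: $\A_5$ is the only nonabelian simple group that can appear.
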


\begin{proof}   This follows from the results of the next subsection. \end{proof}

\subsection{The only subgroup of $SO(4)$ that maps onto a  noncyclic simple group is isomorphic to $\A_5$.}

We prove somewhat more than this.

\begin{theorem}  If $A$ is a nonabelian simple group and a subgroup $H \subset SO(4)$ surjects onto $A$, then $A\cong \A_5$.
\end{theorem}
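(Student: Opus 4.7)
The plan is to exploit the standard double cover $\pi\co S^3 \times S^3 \to SO(4)$ coming from the action of unit quaternions on $\R^4 \cong \mathbb{H}$ by left and right multiplication, with kernel $\{\pm(1,1)\}$. Given a finite subgroup $H \subset SO(4)$ surjecting onto a nonabelian simple group $A$, I would pull $H$ back to the finite subgroup $\widetilde H = \pi^{-1}(H) \subset S^3 \times S^3$; the composition $q\co \widetilde H \to H \to A$ is still surjective. Projecting to the two factors yields finite subgroups $\widetilde H_i \subset S^3$, and each $\widetilde H_i$ must appear on the classical list of finite subgroups of $S^3$: cyclic, binary dihedral $2D_n$, binary tetrahedral $2T$, binary octahedral $2O$, or binary icosahedral $2I$.

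The crux of the argument is a short normal-subgroup case analysis. Let $N_i \subset \widetilde H$ be the kernel of the projection $p_i\co \widetilde H \to \widetilde H_i$; since $\widetilde H$ embeds in $\widetilde H_1 \times \widetilde H_2$, I have $N_1 \cap N_2 = 1$. Because $A$ is simple, the normal subgroup $q(N_1) \trianglelefteq A$ is either trivial or all of $A$. If $q(N_1) = A$, then $N_1$ itself surjects onto $A$, and since $N_1 \subset \{1\}\times \widetilde H_2 \cong \widetilde H_2$ this realizes $A$ as a quotient of a finite subgroup of $S^3$. Otherwise $q(N_1) = 1$, in which case $q$ factors through $\widetilde H/N_1 \cong \widetilde H_1$, and again $A$ is a quotient of a finite subgroup of $S^3$.

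To finish I would enumerate the nonabelian simple quotients of the finite subgroups of $S^3$. Cyclic groups are abelian; the quotients of $2D_n$ are cyclic or dihedral, so not nonabelian simple; $2T \cong SL_2(\F_3)$ has commutator subgroup $Q_8$ with abelian quotient $\Z_3$, giving possible quotients $\{1, \Z_3, A_4\}$; $2O$ has quotients that are subquotients of $S_4$, none nonabelian simple; and $2I$ has $\{\pm 1\}$ as its unique nontrivial proper normal subgroup, so $2I/\{\pm 1\} \cong A_5$ is its only nontrivial quotient and is simple nonabelian. Therefore $A \cong A_5$, as claimed.

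The single step that requires any thought is the dichotomy $q(N_1) \in \{1, A\}$: it lets me conclude that $A$ is already a quotient of one of the two single-factor subgroups $\widetilde H_i$, ruling out any exotic ``diagonal'' subgroup of $\widetilde H_1 \times \widetilde H_2$ contributing a new simple quotient. Everything else is a direct appeal to the classical classification of finite subgroups of $S^3$ and of their quotients, and I do not anticipate any technical difficulty.
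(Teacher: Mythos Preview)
Your argument is correct and follows essentially the same strategy as the paper's: reduce to a single factor of the product via the dichotomy ``$q(N_1)$ is trivial or all of $A$,'' then invoke the classification of finite subgroups of that factor. The only cosmetic difference is that the paper pushes $H$ \emph{down} along the $2$--fold quotient $SO(4)\to SO(3)\times SO(3)$ and finishes with the list of finite subgroups of $SO(3)$, whereas you pull $H$ \emph{up} along $S^3\times S^3\to SO(4)$ and finish with the binary groups in $S^3$; the two routes are equivalent via the double cover $S^3\to SO(3)$.
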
 

Denote the surjection from $H$ to $A$ by $\phi\co H \to A$.  We begin by recalling the structure of $SO(4)$.

The set of unit quaternions is  homeomorphic to $S^3$ and  as a Lie group  is isomorphic   to $SU(2)$.  Quotienting by $\pm 1$ yields a 2--fold cover $SU(2) \to SO(3)$. 

Let $x$ and $y$ be unit quaternions and view elements $v \in \R^4$ as quaternions.  Then $x$ and $y$ define a homomorphism $\psi_{x,y} \co SU(2) \times SU(2) \to SO(4)$ by  $\psi_{x,y} (v) = xvy^{-1}$.  This yields a 2--fold covering of   $SU(2) \times SU(2) \to SO(4) $.   Hence,    $SO(4) \cong (SU(2)) \times SU(2))  / < (-1,-1)> $.

There is a 2--fold covering space  $q\co (SU(2)) \times SU(2)/ \left< (-1,-1)\right>  \to SO(3) \times SO(3)$.   We thus have the following diagram.  

\[
\begin{diagram}
\node{(SU(2) \times SU(2))/ \left<(-1,-1)\right>} \arrow[2]{e,t}{\cong} \arrow{s,l}{2-\text{fold cover}, \text{$q$}}  
\node[2]{SO(4)} \\
\node{SO(3) \times SO(3)} \end{diagram}
\]

We will write element of $SO(4)$ and of $SO(3)\times SO(3)$ as equivalence classes of pairs of unit quaternions. 

\begin{lemma}  The map $\phi$ induces a surjection  $\phi' \co q(H) \subset SO(3) \times SO(3) \to A$.\end{lemma}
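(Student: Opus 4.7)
The plan is to show that $\phi$ factors through the restriction $q|_H$, from which the existence of the surjection $\phi'$ is immediate. The whole argument turns on understanding the kernel of $q$ and exploiting the fact that $A$ is nonabelian simple.

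First I would identify $\ker q$ explicitly. Reading off the covering $(SU(2)\times SU(2))/\langle(-1,-1)\rangle \to SO(3)\times SO(3)$, an element $[(x,y)]$ lies in the kernel iff both $x$ and $y$ map to the identity in $SO(3)$, i.e.\ $x,y\in\{\pm 1\}$. The four preimages $(1,1),(1,-1),(-1,1),(-1,-1)$ collapse to exactly two classes, namely $[(1,1)]=I$ and $[(1,-1)]=[(-1,1)]$. Under the identification with $SO(4)$, the latter element is the central involution $-I$ (negation on $\R^4$). Thus $\ker q=\{I,-I\}$, and in particular $\ker(q|_H)\subseteq\{I,-I\}$.

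Next I would split into two cases depending on whether $-I\in H$. If $-I\notin H$, then $q|_H$ is injective, so we may simply set $\phi' := \phi\circ (q|_H)^{-1}\co q(H)\to A$; this is manifestly a surjection onto $A$. If $-I\in H$, the key point is that $-I$ is central in $SO(4)$ and hence central in $H$, so $\phi(-I)$ is a central element of $\phi(H)=A$. Since $A$ is nonabelian and simple, $Z(A)=1$, forcing $\phi(-I)=1$. Therefore $\ker(q|_H)\subseteq\ker\phi$, and $\phi$ descends to a well-defined surjection $\phi'\co q(H)\to A$ characterized by $\phi'(q(h))=\phi(h)$.

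The only potential obstacle is verifying that the nontrivial element of $\ker q$ really is the central element $-I\in SO(4)$; once that is in hand, the centrality argument together with simplicity of $A$ does all the work. No delicate calculation is needed — the proof is essentially a routine application of the universal property of quotients, with simplicity of $A$ used precisely to kill the central involution.
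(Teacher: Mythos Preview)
Your proposal is correct and follows essentially the same approach as the paper: both split into the cases where $q|_H$ is injective versus where the central element $[(1,-1)]$ lies in $H$, and in the latter case use that $A$ is nonabelian simple (hence centerless) to conclude this element dies under $\phi$. You supply slightly more detail---identifying $[(1,-1)]$ with $-I\in SO(4)$ and invoking $Z(A)=1$ explicitly---but the argument is the same.
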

\begin{proof}  If the map $q\co H \to q(H)$ is an isomorphism, then this is trivially true.  It is possible that  $q\co H \to q(H)$ is two-to-one, which can occur if and only if the central element $(1,-1) \in H$. In this case $q(H) \cong H/\left< (1,-1)\right>$.  Since $A$ is nonabelian and simple, the image of $(1,-1)$ in $A$ is trivial.   \end{proof}

\begin{lemma}  Let $G \subset SO(3) \times SO(3)$.  Let $G_1$ and $G_2$ be the images of the projections of $G$ onto the first and second factors of the product.   If $\phi' \co G \to A$ where $A$ is nonabelian and simple, then a subgroup of  $G_1$ or $G_2$ maps onto $A$.  In particular, $A$ is a quotient of a finite subgroup of SO$(3)$.  \end{lemma}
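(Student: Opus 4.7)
My approach is a Goursat-type analysis of the subdirect embedding $G \hookrightarrow G_1 \times G_2$. Set
\[
K_1 = G \cap (\{1\}\times G_2), \qquad K_2 = G \cap (G_1\times \{1\}),
\]
which are precisely the kernels of the coordinate projections $\pi_1\co G \to G_1$ and $\pi_2\co G \to G_2$, and therefore normal in $G$. Applying the surjection $\phi'$, each of $\phi'(K_1)$ and $\phi'(K_2)$ is a normal subgroup of the simple nonabelian group $A$, so each is either trivial or equal to $A$.

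Next I case-split on $K_1$. If $\phi'(K_1)$ is trivial, then $\phi'$ factors through the quotient $G/K_1 \cong \pi_1(G) = G_1$, giving a surjection $G_1 \twoheadrightarrow A$ and we are done with the first factor. Otherwise $\phi'(K_1) = A$; but $K_1 \subset \{1\}\times G_2$, so the restriction of $\pi_2$ to $K_1$ is injective and identifies $K_1$ with the subgroup $\pi_2(K_1) \subset G_2$. Transporting $\phi'|_{K_1}$ across this isomorphism yields a surjection $\pi_2(K_1) \twoheadrightarrow A$. Either way, one of $G_1$ or $G_2$ contains a subgroup that maps onto $A$, and since $G_1, G_2 \subset SO(3)$ this exhibits $A$ as a quotient of a finite subgroup of $SO(3)$.

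I don't foresee a real obstacle; the argument is essentially bookkeeping once one notices that $K_1$ embeds into $G_2$ because its first coordinate is forced to be trivial. The substantive content sits immediately downstream, where the classification of finite subgroups of $SO(3)$ (cyclic, dihedral, $\A_4$, $\S_4$, $\A_5$) admits only $\A_5$ as a nonabelian simple quotient, forcing $n \le 5$ in the main theorem.
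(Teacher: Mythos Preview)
Your proof is correct and follows essentially the same Goursat-type argument as the paper: both take the normal subgroup $G \cap (\text{one factor})$, use simplicity of $A$ to force its image under $\phi'$ to be trivial or all of $A$, and then either factor $\phi'$ through the other projection or restrict it to the normal subgroup itself. The only cosmetic difference is that the paper constructs the induced map $G_2 \to A$ by hand (choosing a lift and checking well-definedness), whereas you invoke the isomorphism $G/K_1 \cong G_1$ directly; the content is identical.
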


\begin{proof} Let $F = G \cap ( \text{SO}(3) \times \{1\})$.  We have that  $F$ is a normal subgroup of $G$ and thus, $\phi'(F) = A$ or $\phi'(F) = \{1\}$.   In the first case, we are done, so assume that $\phi'(F) = \{1\}$.

We now define a surjective  homomorphism $\phi''\co G_2 \to A$.  Given $y \in G_2$, there exists an  element $x \in G_1$ such that $(x,y) \in G$.  Set $\phi''( y) = \phi'( (x,y))$.   To see that this is well-defined, notice that $(x_1, y) \in G$ and $(x_2, y) \in G$ then $x_1 x_2^{-1} \in F$.  Thus  $  \phi'( (x_1 ,y)) =   \phi'( (x_2 ,y)) $.  It is easily checked that $\phi'$ is surjective and is a homomorphism. 

\end{proof}
\begin{lemma}  The group $\A_5$ is the only finite  noncyclic simple  group contained in SO($3$).
\end{lemma}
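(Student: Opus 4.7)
The plan is to invoke the classical classification of finite subgroups of $SO(3)$, due to Klein: every such group is isomorphic to one of the cyclic groups $\Z_n$, the dihedral groups $D_n$ of order $2n$, the tetrahedral group $\A_4$, the octahedral group $\mathbb{S}_4$, or the icosahedral group $\A_5$. The standard derivation lets the finite group $G \subset SO(3)$ act on its set of poles (unit vectors fixed by some nonidentity rotation) and applies the orbit-counting formula
\[
2(|G|-1) \;=\; \sum_{\text{pole orbits}} |G|\bigl(1 - 1/|G_x|\bigr),
\]
whose Diophantine solutions admit only the five families above.

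Given this list, the proof reduces to a finite check of which entries are noncyclic and simple. The cyclic $\Z_n$ are excluded by hypothesis. For each $n \ge 2$, the rotation subgroup $\Z_n \subset D_n$ is normal of index two, so $D_n$ is not simple. The tetrahedral group $\A_4$ has the Klein four-group $\{e,(12)(34),(13)(24),(14)(23)\}$ as a normal subgroup of order four, so it is not simple. The octahedral group $\mathbb{S}_4$ has $\A_4$ as a normal subgroup of index two, so it is not simple. Only $\A_5$ survives, and its simplicity is standard: its conjugacy class sizes are $1,15,20,12,12$, and no subset containing $1$ sums to a proper divisor of $60$, so $\A_5$ has no proper nontrivial normal subgroup.

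The only real obstacle is justifying the classification itself; since it is completely classical and appears in any introductory text on finite rotation groups, the plan is simply to cite it rather than reproduce the derivation. The simplicity check against the resulting five-family list is then immediate, and yields $\A_5$ as the unique finite noncyclic simple subgroup of $SO(3)$.
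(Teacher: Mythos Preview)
Your proposal is correct and follows essentially the same approach as the paper: invoke the classical classification of finite subgroups of $\SO(3)$ and then check case-by-case which entries are noncyclic and simple. The paper's version is terser---it dispatches $\A_4$ and $\S_4$ in one stroke by quoting that the smallest nonabelian simple group is $\A_5$---whereas you exhibit explicit normal subgroups and verify the simplicity of $\A_5$ via conjugacy class sizes, but the underlying argument is the same.
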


\begin{proof}
The finite subgroups of SO$(3)$ are classified.  Here is the list of possibilities. 
\begin{itemize}
\item  $A_n \cong \Z_n$:  cyclic groups. 
\item $D_n$: dihedral groups. 
\item $E_6 \cong \A_4$: tetrahedral group.  
\item $E_7 \cong \S_4$: octahedral group.  
\item $E_8 \cong \A_5$: icosahedral group.  

\end{itemize}
The subgroups of the dihedral group are either dihedral, and thus not simple, or cyclic.  The smallest nonabelian simple group is $\A_5$.
\end{proof}


\section{The case of $C$ Seifert fibered.}\label{sec:seifertfibered}

We begin with a basic example.

\begin{example} \label{ex:sfexamp}Consider the  $(n+2)$-component link  $L$ formed as follows.  Let $T$ be a standardly embedded torus in $S^3$ and form the     $(np, nq)$--torus link on $T$ with $q > p >1$ relatively prime.   Add to this the cores of two solid tori bounded by $T$.  There is a Seifert fibration of $S^3$ with  the torus link represented by regular fibers and the two cores being neighborhood  of singular fibers of type $p/q$ and $q/p$.\

We leave it to the reader to confirm that for this link, $\Sigma(L) \cong \Z_2 \oplus \S_{n}$.  It should be clear how the components of the  $(np, nq)$--torus link  can be freely permuted.  The $\Z_2$ arises from a diffeomorphism that reverses all the components. 

Two exercises arise here.  The first is to show that   that every symmetry   fixes   the two core circles.  The second is to show that the complement of this link is homeomorphic to the complement of $n+2$ fibers of the Hopf fibration of $S^3$.

More examples can be built from this one.  Let $J\subset S^1 \times B^2$ be a knot for which $\partial (S^1 \times B^2)$ is incompressible in the complement of $J$.  A new link can be formed by replacing neighborhoods of the components of $L$  with copies of $S^1 \times B^2$.  Then the symmetry group of this new link will be isomorphic to either   $  \Z_2 \oplus \S_{n-2}$  or  $  \S_{n-2}$, depending on the symmetry type of $J$.

\end{example}  

\subsection{$C$ is the complement of regular fibers in a closed Seifert manifold}

\begin{example}  Figure~\ref{fig:seifertfibered} provides a schematic of one possible case in which the core $C$ is Seifert fibered.  Some of the  labels in the diagram will be explained later.  A link $L$ can be  formed by filling each $T_i$ with pairs $(S^1 \times B^2, J_i)$ and the $S_i$ are filled with either solid tori  or nontrivial knot complements.   There are constraints required for this to produce a link in $S^3$ and we  do not assert that  in all cases in which $C$ is Seifert fibered it will be of this form.  We illustrate  it to provide  a good model to have in mind as we develop the notation and arguments that follow.  Another good model is provided by Example~\ref{ex:sfexamp}.

\begin{figure}[h]
\labellist
\pinlabel {\text{\Large{$T_1$}}} at  25 -10
\pinlabel {\text{\Large{$f$}}} at  15  40
\pinlabel {\text{\Large{$T_1$}}} at  25 -10
\pinlabel {\text{\Large{$T_l$}}} at 65 -10
\pinlabel {\text{\Large{$T_{l+1}$}}} at 122 -10
\pinlabel {\text{\Large{$T_p$}}} at 160 -10
\pinlabel {\text{\Large{$S_i$}}} at 210 -10
\pinlabel {\text{\Large{$S_j$}}} at -10 80
\endlabellist
\includegraphics[scale=.80]{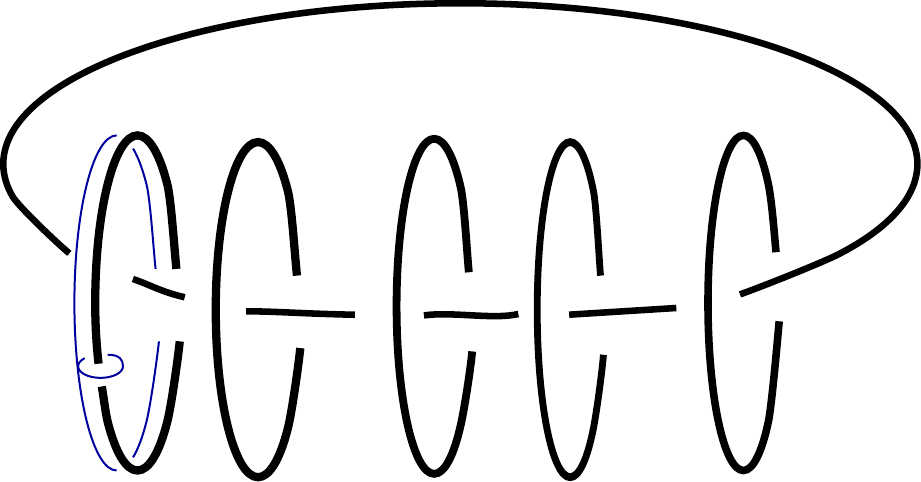}  
\vskip.15in
\caption{Possible Seifert fibered core $C$.}
\label{fig:seifertfibered}
\end{figure}

Notice the complement of this link is homeomorphic to the complement of the link formed by giving the parallel strands a full twist.  In this case, all the components, including the horizontal one, are fibers of the Hopf fibration of $S^3$.  More generally we have the following. 

\end{example}
\begin{theorem} The core $C$ is diffeomorphic to  the complement of a set of  regular fibers in a closed Seifert manifold.
\end{theorem}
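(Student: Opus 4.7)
The plan is to Dehn-fill each boundary component of $C$ along the slope determined by the Seifert fibration, and then observe that the filling solid tori inherit a Seifert fibration whose cores are regular fibers of the extended fibration.

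More precisely, since $C$ is Seifert fibered and each boundary torus $T$ of $C$ is incompressible, the restriction of the fibration to $\partial C$ foliates each boundary torus by parallel circles; these are the regular fibers lying on $\partial C$. On each boundary torus $T_i$ of $C$, let $\phi_i \subset T_i$ denote this fiber slope. First I would form the closed 3--manifold
\[
\widehat{C} \defeq C \cup_{\partial C} \bigg( \bigsqcup_i V_i \bigg),
\]
where each $V_i \cong S^1 \times D^2$ is a solid torus glued to $C$ along $T_i$ so that the meridian of $V_i$ is identified with a curve on $T_i$ transverse to the fiber slope $\phi_i$ (i.e.\ we fill along the fiber slope). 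Next I would extend the Seifert fibration of $C$ over each $V_i$: since the gluing sends the longitude of $V_i$ (the curve isotopic to its core) to $\phi_i$, the circle fibration on $\partial V_i$ bounds a standard fibration of $V_i$ as $S^1 \times D^2$ in which the core $S^1 \times \{0\}$ is a regular fiber. Carrying out this extension at every boundary component produces a global Seifert fibration on $\widehat{C}$, and by construction
\[
C \cong \widehat{C} \setminus \nu\bigl( \{ \text{cores of the } V_i \} \bigr),
\]
which is precisely the complement of a set of regular fibers in the closed Seifert manifold $\widehat{C}$.

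The one step that requires any attention is the claim that filling along the fiber slope really does extend the fibration: this is a standard fact about Seifert fibrations of solid tori, since any foliation of $\partial(S^1 \times D^2)$ by circles meeting the meridian in a single point bounds the product fibration of $S^1 \times D^2$. The incompressibility of the $T_i$, already guaranteed because they are JSJ tori, ensures the fiber slope is not a meridian of any filling solid torus, so each core is indeed a regular (non-singular) fiber, as asserted.
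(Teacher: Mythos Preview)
Your argument is correct and is essentially identical to the paper's: glue a solid torus to each boundary component so that its longitude matches the fiber slope, then extend the fibration over each solid torus so that the cores become regular fibers. The only quibble is that your final paragraph invoking incompressibility is unnecessary---you are \emph{choosing} the gluing so that the meridian is transverse to the fiber, so the core is a regular fiber by construction, not because of any property of the JSJ tori.
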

\begin{proof}   Build a manifold $M$ by attaching solid tori to the boundary components of $C$ so that each longitude is identified with the fiber of the fibration of $C$.  Then the Seifert fibration of $C$ extends to $M$ and the cores of the solid tori are regular fibers.
\end{proof}

We now fix the choice of that $M$ and its Seifert fibration.

\subsection{Notation and a basis for $H_1(T_i)$}

For each $T_i$, there is a basis of $H_1(T_i)$ represented by a pair of curves, $\{f_i, g_i\}$:  since $T_i$ bounds the solid torus neighborhood of  a regular fiber, we let $f_i$ denote the fiber and let $g_i$ denote the meridian of the solid torus.

Each torus  $T_i$ bounds a  submanifold  of $S^3$ that contains the component $L_i$; denote it be  $W_i$.  All the pairs $(W_i,L_i)$ are diffeomorphic, so we choose one and denote it $(W,K)$ with boundary $T$.  We have that $T$ contains a canonical longitude  that is null-homologous in $W$ which we denote $\lambda$;  chose a second curve intersecting it once and denote it $\mu$.

We now see that $(S^3, L)$ is built from $C$ by attaching copies of  $W$ to the $T_i$ using  attaching maps we denote $G_i$.   (Other manifolds have to be attached along the other boundary components of $C$, which we have denoted $S_i$.)   Denote the images of $\{ \lambda, \mu  \}$  under $G_i$ by $\{ \lambda_i , \mu_i \} $.

\begin{theorem}  The intersection number of $\lambda_i$ with $f_i$ is nonzero.   \end{theorem}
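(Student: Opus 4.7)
The plan is to argue by contradiction: suppose $\lambda_i \cdot f_i = 0$ on $T_i$. Both classes are primitive in $H_1(T_i;\Z)$ — the curve $\lambda_i$ because it generates $\ker\bigl(H_1(T_i)\to H_1(W_i)\bigr) \cong \Z$ by the half-lives-half-dies principle applied to the torus $T_i$ bounding $W_i$ on one side in $S^3$, and $f_i$ because it is represented by a regular Seifert fiber, a simple closed curve on $T_i$. Zero intersection between two primitive classes on a torus forces $\lambda_i = \pm f_i$ as unoriented isotopy classes, so the regular Seifert fiber of $C$ is isotopic on $T_i$ to a curve bounding (homologically) in $W_i$.

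To reach a contradiction with the JSJ properties of $T_i$, I would combine two surfaces across $T_i$. On the $C$ side, the Seifert fibration $\pi\colon C \to B$ yields a vertical annulus $A \subset C$ with both boundary circles on $T_i$: pick a short embedded arc $\alpha \subset B$ with both endpoints on the boundary circle $\pi(T_i)$, lying in the interior of $B$ away from any cone points, and take $A = \pi^{-1}(\alpha)$, so that $\partial A$ consists of a pair of parallel fiber copies of $f_i$. On the $W_i$ side, since $\lambda_i = \pm f_i$ and $\lambda$ bounds a compact orientable surface in $W_i$, a standard sequence of compressions and meridional tubings produces an orientable surface $F \subset W_i \setminus \nu(L_i)$ that is incompressible in $W_i \setminus \nu(L_i)$ and has $\partial F$ equal to one copy of $f_i$ on $T_i$ together with a collection of meridians of $L_i$. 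Gluing $A$ to $F$ along the shared copy of $f_i$ yields a properly embedded surface $\Sigma \subset S^3 \setminus \nu(L)$ whose boundary consists of a copy of $f_i$ on $T_i$ together with some meridians of $L_i$.

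The existence of $\Sigma$ contradicts the JSJ assumption on $T_i$. Compressing $\Sigma$ innermost-disk-first in $S^3 \setminus \nu(L)$ either produces a compressing disk for $T_i$ bounded by the essential curve $f_i$ (violating incompressibility of the JSJ torus $T_i$), or collapses $\Sigma$ onto an isotopy exhibiting $T_i$ as parallel to a peripheral torus of $L_i$ (violating the requirement that JSJ tori be not boundary-parallel). The main obstacle will be the careful verification that the compressions preserve $f_i$ as an essential boundary curve of the intermediate surfaces and ultimately yield a genuine compressing disk rather than an inessential one; controlling the meridional intersections with $L_i$ throughout is the technical crux. A cleaner high-level alternative is to appeal to the standard structural theorem that Dehn filling a Seifert fibered space along its fiber slope yields a reducible manifold (a connected sum of lens spaces), realize this Dehn filling topologically inside $C \cup W_i \subset S^3$ via a thin regular neighborhood in $W_i$ of a surface bounded by $\lambda$, and invoke non-splittability of $L$ to rule out the resulting reducing 2-sphere; this route bypasses the case analysis but relies on a black-box result from Seifert fibered space theory.
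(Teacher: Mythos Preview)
Your reduction to $\lambda_i=\pm f_i$ is correct, but after that the argument does not close. The surface $\Sigma$ you build has one boundary component equal to the essential curve $f_i$ on the \emph{interior} torus $T_i$ (together with meridians of $L_i$); there is no general mechanism by which compressing such a surface produces a compressing disk for $T_i$ or a boundary-parallelism. You yourself flag that the compressions need not preserve $f_i$ as an essential boundary curve, and indeed they need not: nothing prevents $\Sigma$ from being boundary-compressed along $T_i$ until the $f_i$--boundary disappears entirely, leaving only meridional boundary on $\partial\nu(L_i)$. Your alternative route via ``Dehn filling along the fiber slope yields a reducible manifold'' does not apply either: the piece $W_i$ glued to $C$ along $T_i$ is not a solid torus, and a regular neighborhood of a Seifert surface in $W_i$ is not a filling solid torus, so you cannot realize the fiber-slope Dehn filling inside $S^3$ in the way you suggest.

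The paper's proof uses an ingredient you never invoke: the transitive symmetry. First, uniqueness of the Seifert fibration of $C$ (Waldhausen, valid since $C$ has at least four boundary tori) shows that any element of $\diff(L)$ carrying $L_i$ to $L_j$ sends $f_i$ to $\pm f_j$; since it also sends $\lambda_i$ to $\pm\lambda_j$, the hypothesis $\lambda_i=\pm f_i$ forces $\lambda_j=\pm f_j$ for a \emph{second} index $j$. Now take a vertical annulus $A$ in $C$ running from $T_i$ to $T_j$ and cap its two boundary circles with Seifert surfaces $B_i\subset W_i$ and $B_j\subset W_j$ for $\lambda_i$ and $\lambda_j$. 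The union $A\cup B_i\cup B_j$ is a \emph{closed} surface in $S^3$, and a curve on $T_i$ dual to $f_i$ meets it algebraically once---impossible since $H_2(S^3)=0$. The passage to a second torus is exactly what makes the homological count come out odd; your single-torus annulus has both ends on $T_i$ and the corresponding intersection count is even, which is why no contradiction emerges.
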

\begin{proof}  Our proof depends on the uniqueness of the fibrations of Seifert fibered manifolds, up to isotopy.  This does not hold for all Seifert manifolds (e.g.~$S^1\times B^2$), but Waldhausen~\cite{MR235576} proved that if the Seifert fibered manifold $M$ is sufficiently large, that is, if it contains an incompressible surface that is not boundary parallel, then the fibration is unique.  (See also the reference~\cite{MR0426001}.)  In the case that the three-manifold has four or more boundary components, it is clearly sufficiently large.  The preimage of a circle in the base space that bounds two of the boundary components is an incompressible torus and is not boundary parallel. 

We now claim   that the $\lambda_i$ are not fibers of the fibration. Consider $ i \ne j$ and the pair $\lambda_i$ and $\lambda_j$.  Any element of $\diff(L)$ that maps $L_i$ to $L_j$ carries $\lambda_i$ to $\pm\lambda_j$.  Self-homeomorphisms of Seifert fibered spaces with more than three boundary components preserve fibers up to isotopy, so if $\lambda_i$ is a fiber, then $\lambda_j$ is also a fiber.  

Suppose that are $\lambda_i$ and $\lambda_j$ are fibers.  Then there is a vertical  annulus  $A$ in $C$ joining $\lambda_i$ to $\lambda_j$.  There are also surfaces $B_i$ and $B_j$ in $W_i$ and $W_j $ with boundaries $\lambda_i$ and $\lambda_j$. The union of $A$ with $B_i \cup B_j$ is a closed surface in $S^3$.   There is also a curve on $T_i$ meeting this surface in exactly one point.  This is impossible in $S^3$.

\end{proof}

\subsection{Maps between the $T_i$.}

Without loss of generality, we will focus on $T_1$ and $T_2$.  We denote a chosen element in $\diff(L)$ that carries $L_1$ to $L_2$ by $F$.  Note that we can assume $F(f_1) = f_2$, $F(\lambda_1) = \lambda_2$, and $F(\mu_1) = \mu_2$.  However, maps of $C$ do  not necessarily preserve the $g_i$.     We can assume that $F(g_1) = g_2 + wf_2$ for some $w$.

For both values of $i$ we have constants so that 
\[ \lambda_i = \alpha_i  f_i + \beta_i g_i \hskip.5in  \mu_i = \delta_i f_i + \gamma_i g_i.\]  
Applying $F$ to the set with $i=1$ and   renaming variables, we have 
\[ \lambda_1= \alpha   f_1 + \beta  g_1 \hskip.5in  \mu_1= \delta  f_1+ \gamma g_1 \hskip.3in \text{and}\]  

\[ \lambda_2= (\alpha   f_2 + \beta  g_2) + \beta w f_2 \hskip.5in  \mu_2= ( \delta  f_2+ \gamma g_2) + \gamma wf_2.\]  

\subsection{Constructing the transposition}

\begin{theorem}\label{thm:transpose}There is a diffeomorphism $G $  of $C$ that interchanges $T_1$ and $T_2$ and is the identity on all other boundary components of $C$.  The  map $G$ can be chosen so  that it preserves the $f_1$ and satisifies $G(g_1) = g_2 + wf_2$ and $G(g_2) = g_1 -wf_2$.

\end{theorem}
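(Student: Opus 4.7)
The plan is to construct $G$ as a fiber-preserving lift of a half-twist on the base orbifold of the Seifert fibration of $C$. Pass to the base orbifold $B_0$: each boundary torus $T_i$ and $S_j$ corresponds to a puncture of $B_0$, and each singular fiber to a cone point in the interior. Waldhausen's uniqueness theorem, invoked just above, ensures that orbifold diffeomorphisms of $B_0$ that are the identity near the cone points lift to fiber-preserving diffeomorphisms of $C$, with boundary behavior determined by the orbifold diffeomorphism and a choice of fiber-rotation.

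First I would choose a closed disk $D \subset B_0$ whose interior contains exactly the two punctures for $T_1$ and $T_2$, no cone points, and no other punctures, with $\partial D$ in the interior of $B_0$. On $D$, define $\tau$ to be a $180^\circ$ rotation interchanging the two punctures, damped to the identity near $\partial D$, and extend by the identity on $B_0 \setminus D$. Using a trivialization of the $S^1$-bundle over $D$, lift $\tau$ to a fiber-preserving diffeomorphism $G_0$ of $C$ that is the identity outside the preimage of $D$; in particular $G_0$ is the identity on every boundary component other than $T_1$ and $T_2$, which it interchanges. In the local trivial-bundle model, a coherent orientation choice gives $G_0(f_1) = f_2$ and $G_0(g_1) = g_2 + k_0 f_2$ for some integer $k_0$, and the involutive symmetry $\tau^2 \simeq \mathrm{id}$ rel $\partial D$ forces $G_0|_{T_2} = (G_0|_{T_1})^{-1}$, yielding $G_0(g_2) = g_1 - k_0 f_1$.

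To normalize $k_0$ to the required integer $w$, I would conjugate by a Dehn twist along $f_1$ in a collar of $T_1$: set $G = \delta_1^{-(w - k_0)} \circ G_0 \circ \delta_1^{w - k_0}$. A direct computation, using $\delta_1(f_1) = f_1$, $\delta_1(g_1) = g_1 + f_1$, and $\delta_1 = \mathrm{id}$ on $T_2$, then gives $G(f_1) = f_2$, $G(g_1) = g_2 + w f_2$, and $G(g_2) = g_1 - w f_1$ (which is presumably the intended form of the displayed formula: since $G(g_2)$ lies on $T_1$, the $f_2$ in the statement must be a typo for $f_1$). Because $\delta_1$ is supported in a collar of $T_1$, the map $G$ remains the identity on every other boundary component of $C$, and still interchanges $T_1$ with $T_2$.

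The main obstacle I anticipate is verifying the sign $\epsilon = +1$ in $G_0(g_1) = \epsilon g_2 + k_0 f_2$, together with the compatible choice $G_0(f_1) = +f_2$: these amount to a coherent orientation choice for the lift across both disk neighborhoods, using the orientation of $C$ and the common orientation of the circle fibers over $D$. A secondary technical point is confirming that the cone-point-free disk $D$ genuinely admits a trivialization of the restricted $S^1$-bundle in which the lift can be taken to be the identity along $\partial D$; this follows because the circle bundle over a disk is trivial, and the partition-of-unity construction of fiber-preserving lifts permits such boundary normalization.
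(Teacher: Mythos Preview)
Your approach is essentially the paper's: construct a fiber-preserving swap of $T_1$ and $T_2$ supported away from the other boundary components, then adjust by a twist in the fiber direction. The paper's version is marginally more direct in both steps. For the swap, it simply asserts (using that $T_1,T_2$ bound neighborhoods of regular fibers in the closed-up manifold $M$) a diffeomorphism carrying $(f_1,g_1)\mapsto(f_2,g_2)$ exactly, i.e.\ your $k_0=0$; your half-twist lift realizes this concretely. For the adjustment, the paper composes with a $w$-fold Dehn twist along a \emph{vertical annulus} in $C$ joining $f_1$ to $f_2$: such a twist is the identity on all other boundary components and sends $g_1\mapsto g_1-wf_1$, $g_2\mapsto g_2+wf_2$, so postcomposing with it gives the desired formulas in one stroke, with no conjugation needed.

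One small correction to your write-up: $\tau^2$ is not isotopic to the identity rel $\partial D$; the square of a half-twist is a Dehn twist about $\partial D$. Your conclusion $G_0|_{T_2}=(G_0|_{T_1})^{-1}$ is nonetheless correct, since $\tau^2$ is the identity in a neighborhood of the two punctures, so $G_0^2$ acts trivially on $H_1(T_i)$. You are also right that the displayed $-wf_2$ in the statement should read $-wf_1$.
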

\begin{proof}
Using the fact that $T_i$ are boundaries of regular fibers, there is a diffeomorphism $G$ of $C$ that interchanges $T_1$ and $T_2$ that also preserves the pairs $\{f_i, g_i\}$.  This map can be assumed to be the identity on the other components.

There is a  vertical annulus in $C$ joining $f_1$ and $f_2$.  We can perform a  $w$--fold twist along this annulus.  This is the identity map on all boundary components other than $T_1$ and $T_2$.  On $T_1$ and $T_2$  it preserves the $f_1$ and $f_2$, it maps $g_1$ to $g_1 -wf_1$ and  it maps  $g_2$ to $g_2 + wf_2$.

\end{proof}

\subsection{Main theorem in Seifert fibered case}
\begin{theorem} If $\A_n \subset \S(L)$ and the associated core  $C$ is Seifert fibered, then there is an element $H \in \diff(L)$ which transposes $L_1$ and $L_2$.  Equivalently, $\S(L) = \Sn$. \end{theorem}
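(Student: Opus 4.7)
The plan is to promote the diffeomorphism $G$ of $C$ constructed in Theorem~\ref{thm:transpose} to a self-diffeomorphism $H$ of $(S^3,L)$ that realizes the transposition $L_1 \leftrightarrow L_2$. Since $G$ is the identity on every boundary component of $C$ other than $T_1$ and $T_2$, the task reduces to extending $G$ across the two fillings $(W_1,L_1)$ and $(W_2,L_2)$; the pieces attached along the $S_j$ may be filled by the identity.

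The crucial step is a short calculation showing that $G|_{T_1}\co T_1 \to T_2$ carries the pair $(\lambda_1,\mu_1)$ onto $(\lambda_2,\mu_2)$, and symmetrically on $T_2$. Using $G(f_1)=f_2$, $G(g_1)=g_2+wf_2$ together with $\lambda_1=\alpha f_1+\beta g_1$, one computes
\[
G(\lambda_1)=\alpha f_2+\beta(g_2+wf_2)=(\alpha+\beta w)f_2+\beta g_2=\lambda_2,
\]
and analogously $G(\mu_1)=\mu_2$; the symmetric computation on $T_2$ gives $G(\lambda_2)=\lambda_1$ and $G(\mu_2)=\mu_1$. This is not a coincidence: the $w$-twist in Theorem~\ref{thm:transpose} was chosen precisely so that $G$ agrees with $F$ on the basis $\{f_1,g_1\}$, and hence on all of $H_1(T_1)$.

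With boundary compatibility in hand, the extension is essentially automatic. The element $F \in \diff(L)$ already restricts to a diffeomorphism $(W_1,L_1)\to(W_2,L_2)$; since $G|_{T_1}$ and $F|_{T_1}$ induce the same map on $H_1(T_1)$, they are isotopic as maps $T_1 \to T_2$, so $G|_{T_1}$ likewise extends to a diffeomorphism of pairs $H_1\co (W_1,L_1)\to(W_2,L_2)$. An identical argument yields $H_2\co (W_2,L_2)\to(W_1,L_1)$ extending $G|_{T_2}$. Gluing $G$, $H_1$, $H_2$, and the identity on the remaining fillings produces the desired $H\in\diff(L)$, which induces the transposition $(1\,2)\in\Sn$; combined with the hypothesis $\A_n \subset \S(L)$ and the fact that $\A_n$ together with any transposition generates $\Sn$, we conclude $\S(L)=\Sn$. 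The only nontrivial point in the whole argument is the boundary-matching computation above; everything else follows formally from the design of $G$ and the existence of $F$.
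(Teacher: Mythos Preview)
Your proof is correct and follows essentially the same route as the paper: both arguments take the diffeomorphism $G$ of Theorem~\ref{thm:transpose}, verify by direct computation that $G$ carries $(\lambda_1,\mu_1)$ to $(\lambda_2,\mu_2)$ and conversely, and conclude that $G$ extends across the identical fillings $(W_i,L_i)$ to give the desired transposition in $\diff(L)$. Your write-up is slightly more explicit about the extension step (invoking the mapping class group of the torus and the restriction of $F$), whereas the paper compresses this into the single remark that $G$ ``interchanges the attaching maps of $W$ to $T_1$ and $T_2$, and thus extends as desired,'' but the substance is the same.
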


\begin{proof} The map $G$ given in Theorem~\ref{thm:transpose}  satisfies

\[ G(\lambda_1) = \alpha f_2 + \beta(g_2 +wf_2) \hskip.2in \text{and} \hskip.2in G(m_1) = \delta f_2 + \gamma (g_2 +wf_2) .
\]
It also satisfies
\[ G(\lambda_2) = \alpha f_1 + \beta(g_1 -wf_1) +\beta w f_1 \hskip.2in \text{and} \hskip.2in G(\mu_1) = \delta f_1 + \gamma (g_1 -wf_1)  +\gamma w f_1.
\] 
Simplifying shows that this interchanges the attaching maps of $W$ to $T_1$ and $T_2$, and thus extends as desired. 
\end{proof}

\section{Questions}\label{sec:questions}

\begin{enumerate}
\item  Figure~\ref{fig:a4} illustrates a 4--component  link  $L$ satisfying  $ \S(L) = \A_4$.   This example was found by  Nathan Dunfield using Snappy~\cite{SnapPy} .  In the Snappy enumeration of links, it is  $L12a2007$.  Does there exist a 5--component link $L$ with $\S(L) = \A_5$.  (The group $\S_4$ has 11 conjugacy classes of subgroups.  Each is $\S(L)$ for some link.  The case of $\A_4$ is the most difficult to find.)
\begin{figure}[h]
\includegraphics[scale=.15]{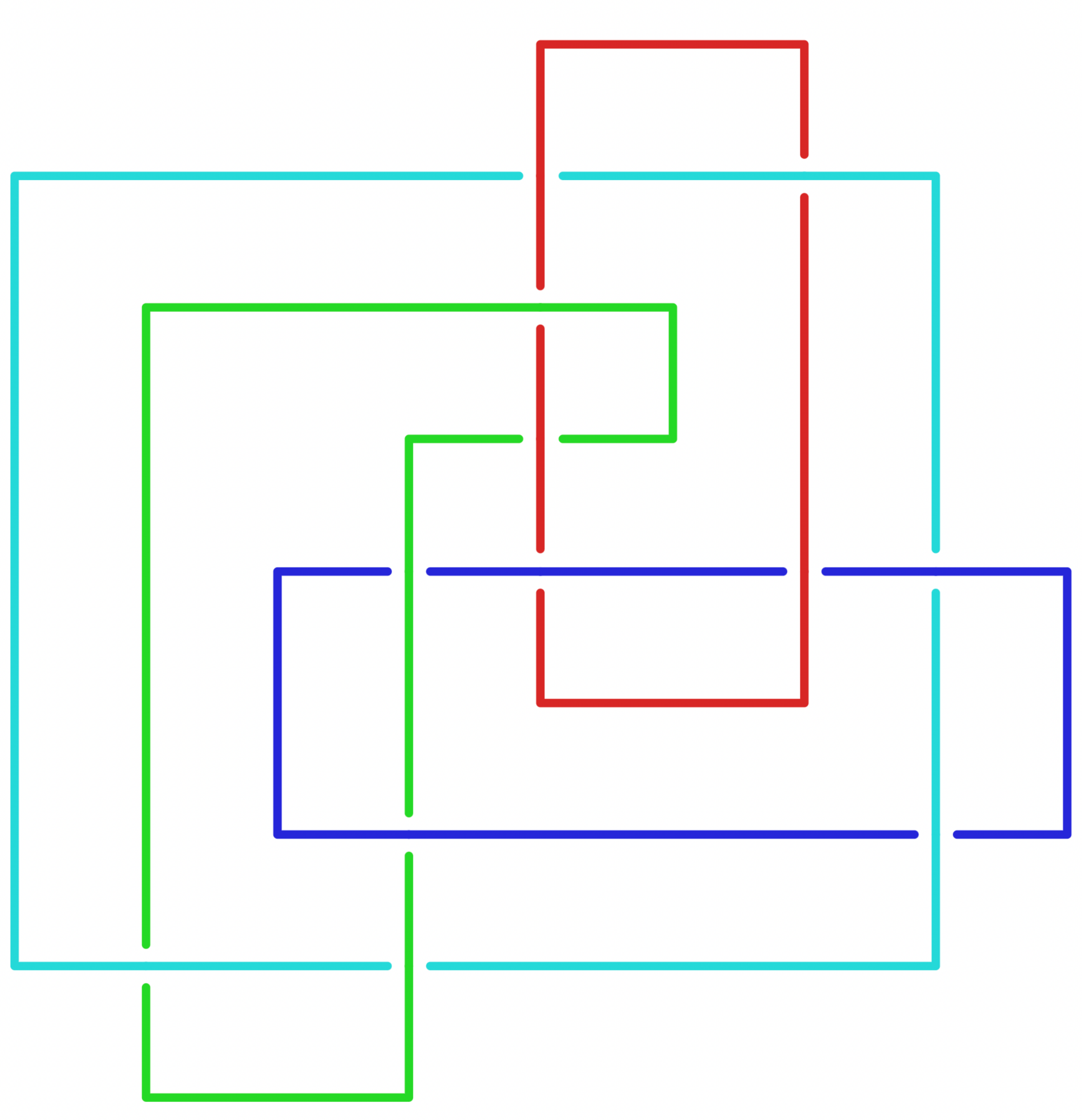}  
\caption{A link with intrinsic symmetry group $A_4$.}
\label{fig:a4}
\end{figure}

\item \label{question:fw} The Fox-Whitten group $\Gamma_n$ maps onto $\S_n$, and thus the obstructions we have developed here provide obstructions to groups $G \subset \Gamma_n$ from being oriented intrinsic symmetry groups of links.  Can the techniques used here provide finer obstructions in the oriented case?

\item  As a particular example of Quesiton~\ref{question:fw}, can  any of the unknown cases for $2$--component links described in Section~\ref{sec:links} be eliminated as possible intrinsic symmetry groups?

\item If a subgroup $H\subset \S_n$ or $H \subset \Gamma_n$ is the intrinsic symmetry group for a link, is it the intrinsic symmetry group of a non-split link or of an irreducible link?

\item \label{prob:brunnian}  A natural class of links consists of Brunnian links; these are non-splittable but become the unlink upon removing any one of the components.  The links produced in Theorem~\ref{thm:fullsym} having symmetry group $\Sn$ are not Brunnian.  The example of 2--component and 3--component links with $\S(L) = \Sn$ that proceed the proof of that theorem are Brunnian.  Hence, we ask: for all $n\ge 3$, does there exist a Brunnian link $L$ with $\S(L) = \Sn$?

\item Another class to consider is alternating links, and presumably there are strong constraints on $\S(L)$ for these.  

\item  Let $M$ be a  compact three-manifold with $n$ torus boundary components, $\partial_i(M)$.  Choose a basis of $H_1(\partial_i(M))$ for each $i$.  One can form a Whitten-like group   $\Omega_n =\Z_2 \oplus (G^n  \rtimes \S_n)$ where $G$ is the automorphism group of  $\Z \oplus \Z$.   Each  manifold $M$ gives rise to a subgroup of $\Omega_n$.   What subgroups arise in this way?  This is particularly interesting in the case that the interior of $M$ has a complete hyperbolic structure.

\item  The previous question can be modified.  Given a subgroup $H \subset \S_n$, is there a complete  hyperbolic three-manifold with $n$ cusps such the $H$ represents the permutations of the cusps that are realized by isometries of $M$?  In relation to this, Paoluzzi and Porti~\cite{MR2493374} proved that every finite group is the isometry group of the complement of a hyperbolic link in $S^3$.   Notice that their isometries need not extend to $S^3$.  Applying their construction to a subgroup of $\S_n$ does not  produce an $n$ component link.
\end{enumerate}



\bibliography{../../../BibTexComplete}
\bibliographystyle{plain}	

\end{document}